\documentclass[11pt]{article}
\usepackage{mathrsfs}
\usepackage{}
\usepackage{amsfonts}
\usepackage{amssymb}
\usepackage{graphicx}
\usepackage{amsmath}\usepackage[T1]{fontenc}\usepackage{lmodern}
\usepackage{esint}
\usepackage{enumitem}

\def\sqr#1#2{{\vcenter{\vbox{\hrule height.#2pt
              \hbox{\vrule width.#2pt height#1pt \kern#1pt \vrule
width.#2pt}
              \hrule height.#2pt}}}}
\def\signed #1{{\unskip\nobreak\hfil\penalty50
              \hskip2em\hbox{}\nobreak\hfil#1
              \parfillskip=0pt \finalhyphendemerits=0 \par}}
\def\endpf{\signed {$\sqr69$}}
\def\3n{\negthinspace \negthinspace \negthinspace }
\def\2n{\negthinspace \negthinspace }
\def\1n{\negthinspace }

\def\ms{\medskip}

\def\q{\quad}

\def\({\Big (}
\def\){\Big )}
\def\[{\Big[}
\def\]{\Big]}

\def\be{\begin{equation}}
\def\bel{\begin{equation}\label}
\def\ee{\end{equation}}
\def\bea{\begin{eqnarray}}
\def\eea{\end{eqnarray}}
\def\bt{\begin{theorem}}
\def\et{\end{theorem}}
\def\bc{\begin{corollary}}
\def\ec{\end{corollary}}
\def\bl{\begin{lemma}}
\def\el{\end{lemma}}
\def\bp{\begin{proposition}}
\def\ep{\end{proposition}}
\def\br{\begin{remark}}
\def\er{\end{remark}}
\def\ba{\begin{array}}
\def\ea{\end{array}}
\def\bd{\begin{definition}}
\def\ed{\end{definition}}

\newtheorem{lemma}{Lemma}[section]
\newtheorem{remark}{Remark}[section]

\newtheorem{theorem}{Theorem}[section]
\newtheorem{corollary}{Corollary}[section]

\newtheorem{definition}{Definition}[section]
\newtheorem{proposition}{Proposition}[section]

\makeatletter
   
   \@addtoreset{equation}{section}
\makeatother
\allowdisplaybreaks
\begin{document}

\title{\bf Controllability of quasilinear parabolic equations under multiplicative mobile controls}

\author{Lingyang Liu\thanks{School of Mathematical Sciences, South China Normal University, Guangzhou, 510631, China. E-mail address: liuly@scnu.edu.cn \ms } \q
 \q }

\date{}

\maketitle

\begin{abstract}
This paper addresses the controllability of a class of quasi-linear parabolic equations governed by multiplicative controls with mobile support. To prove the existence of such a control forcing the solution to rest at time $T>0$, we first establish the decay property of solutions for the uncontrolled system. Unlike the case of the linear heat equation, the nonlinearity in the principal part of the operator introduces significant challenges. These difficulties necessitate a novel approach, ultimately leading us to solve the controllability problem within the framework of classical solutions. Through a carefully constructed smooth transition, we demonstrate that there exists a multiplicative control driving the state exactly to rest at time $t=T$.
\end{abstract}

\noindent{\bf Keywords: Nonlinear parabolic equation, Controllability, Multiplicative control,  Nonnegative additive control }

\section{Introduction and main result}\label{sec1}

It is known that linear, semilinear and quasilinear parabolic systems--in the absence of constraints--are controllable in any positive time (see, e.g., \cite{Bec,LZ}). However,
controllability under nonnegativity constraints in parabolic systems does not hold when the
time horizon is too short. The first relevant work appears to be \cite{LTZ}. There, the authors studied the existence of a positive minimal controllability time for the heat equation under unilateral state or control constraints. In a later study, the authors of \cite{PZ} extended the analysis of \cite{LTZ} to semilinear equations with $C^1$ nonlinearities. In \cite{NC}, the same question for quasilinear parabolic equations was investigated. It is also worth mentioning that the nonnegative controllability of a class of Newtonian filtration equations with interior degeneracy was discussed in \cite{LiG}. It was shown that the target set is approximately controllable when the control acts on the entire spatial domain, while controllability fails if the control is restricted to a strict open subset. These works focus on the constrained controllability of parabolic equations governed by additive controls acting on a proper subset of the boundary or the domain.

Regarding multiplicative controllability in parabolic systems, the authors of \cite{FK} proved the approximate controllability of the one-dimensional heat equation governed by either multiplicative or nonnegative additive controls with mobile local support. Interestingly, they showed that for any $  T > 0 $, the heat equation lacks controllability under static support. Nevertheless, global approximate controllability was achieved with mobile support. In \cite{LG}, the authors generalized the results of \cite{FK} to the case of heat equations with boundary degeneracy.

Let $ T > 0  $ and denote $   Q_T:= (0,1)\times (0, T)  $. In this paper, we consider the following Dirichlet problems:
\begin{equation}\label{1.1}
\left\{\begin{array}{ll}
y_{t}-(a(y)y_{x})_x=u(x,t)y(x,t)\chi_{\omega(t)}(x) &\text{in}\ Q_T,\\[2mm]
y(0,t)=y(1,t)=0 &\text{in}\ (0,T),\\[2mm]
y(x,0)=y_0(x)&\text{in}\ (0,1),
\end{array}\right.
\end{equation}
where $y_0\in L^2_+(0,1)=\{\phi\in L^2(0,1):\phi(x)\geq0\ \text{a.e.}\ \text{in}\ (0,1)\}$,  $  \omega(t)   $ is a non-empty open subset of $(0,1)$ for every $   t \in (0, T)  $, $\chi_{\omega(t)}$
denotes the characteristic function of $\omega(t)$, $u$ is a multiplicative control that, at each time $t$, is supported in ${\omega(t)}$, and $a$ is assumed to be a real-valued function satisfying
\begin{equation}\label{c1}
a\in C^2(\mathbb R), \quad  \inf\limits_{r\in\mathbb R}a(r)>0\quad\text{and}\quad \sup\limits_{r\in\mathbb R}\left\{|a'(r)|, |a''(r)|\right\}<\infty.
\end{equation}

We address the null controllability of quasilinear parabolic equations under multiplicative control. Our objective is to find an $L^\infty$-control steering the state of the parabolic system to zero exactly at any finite time $T>0$. It is observed that the parabolic system \eqref{1.1} is not controllable in $L^2_+(0,1)$ with a static support. Hence, we allow the control support to move in order to achieve the null controllability of \eqref{1.1}. Specifically, the authors of \cite{FK} applied the method of separation of variables to design controls that achieve the desired steering, even allowing for the control to be zero at the end of the process. However, the method of constructing an explicit control function seems inapplicable to quasilinear parabolic problems, due to the presence of time-space dependent coefficients in the principal part.

In the sequel, we assume that
\begin{equation}\label{r}
\omega(t)=(r(t),r(t)+l),
\end{equation}
where the length $l \in (0, 1)$ is fixed and $r :[0,T ] \rightarrow [0,1-l]$ is a function that defines the position of the support
at time $t$. By definition, the support $  \omega(t) $ is static if $ r $ is constant and mobile if $  r  $ is time-varying.
Furthermore, we denote by $\tilde{y}_{y_0} $ the solution of the uncontrolled system
and by $  PC[0, T]$ the set of piecewise constant functions $  r: [0, T] \to [0, 1-l]  $ with at most a finite number of discontinuities.

First, we will show the lack of controllability for (\ref{1.1}) with a static subdomain.
\begin{theorem}\label{t1.1}
Let us assume that $y_0\in L^2_+(0,1)$, $y_0\not\equiv 0$, and $\omega$ is
a fixed proper subinterval of $(0, 1)$. Denote by $y_{y_0,u,\omega}$ the solution of system (\ref{1.1}). Then, for any $T > 0$, the set
\begin{equation*}
F_{y_0,\omega}(T) :=\left\{y_{y_0,u,\omega}(x,T) : x \in (0, 1), u \in L^\infty(Q_T), \omega(t) \equiv \omega, \forall t \in (0, T )\right\}
\end{equation*}
is not dense in $L^2_+(0,1)$.
\end{theorem}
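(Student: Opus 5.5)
Since $\omega$ is a proper subinterval of $(0,1)$, the set $(0,1)\setminus\overline{\omega}$ contains a nonempty open interval $I=(\alpha,\beta)$; for definiteness assume $\omega=(r_0,r_0+l)$ with $r_0>0$ and take $I\subset(0,r_0)$ (the case $r_0+l<1$ is symmetric). The idea, as in \cite{FK}, is a comparison principle on $I\times(0,T)$, where the equation has no control. There the solution $y=y_{y_0,u,\omega}$ satisfies $y_t-(a(y)y_x)_x=0$. The aim is to show that $y(\cdot,T)$ is bounded below on $I$ by a fixed function $\underline{y}(\cdot,T)\ge0$, $\underline y\not\equiv 0$, which depends only on $y_0$, $\omega$ and $T$ and not on $u$. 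Then the target $0\in L^2_+(0,1)$ satisfies
\[
\|y(\cdot,T)-0\|_{L^2(0,1)}\ge \|\underline{y}(\cdot,T)\|_{L^2(I)}=:\delta>0
\]
for all $u\in L^\infty(Q_T)$, so $F_{y_0,\omega}(T)$ is not dense.

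\textbf{Step 1 (positivity).} Since $y_0\ge0$ and the reaction term $u\chi_\omega y$ is linear in $y$ with bounded coefficient, the weak maximum principle gives $y\ge0$ in $Q_T$. To see this, test the equation with $y^-$ and use $a\ge a_0>0$ and Gronwall. Hence the lateral values of $y$ on $\{\alpha,\beta\}\times(0,T)$ are nonnegative.

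\textbf{Step 2 (comparison on $I$).} Let $z$ solve $z_t-(a(z)z_x)_x=0$ in $I\times(0,T)$ with $z=0$ on $\{\alpha,\beta\}$ and $z(\cdot,0)=y_0|_I$. This $z$ is independent of $u$. The comparison principle for the quasilinear operator gives $y\ge z$ in $I\times(0,T)$: write the difference equation as $w_t-(a(y)w_x+b\,w)_x=0$ with $b=\int_0^1 a'(z+s(y-z))ds\,z_x$, and test with $w^-$. Here one needs $z_x\in L^\infty$ or suitable integrability, which follows from parabolic regularity and the bounds \eqref{c1} on $a'$. Strong maximum principle / Harnack then gives $z(\cdot,T)>0$ in $I$ provided $y_0\not\equiv0$ on $I$.

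\textbf{Step 3 (the case $y_0\equiv0$ on $I$).} This is the main obstacle. I would handle it by choosing $I$ differently. Since $y_0\not\equiv0$, and $u$ could be very negative in $\omega$ to kill mass there, I need positivity to enter $I$ from mass outside $\omega$. So I take $I$ to be the whole component $J$ of $(0,1)\setminus\overline\omega$ (say $J=(0,r_0)$). If $y_0\not\equiv0$ on $J$, Step 2 applies on $J$ with zero Dirichlet data at both ends. If $y_0$ vanishes on both components of $(0,1)\setminus\overline\omega$, then $y_0$ is supported in $\overline\omega$, and I instead pick the target $y_1\in L^2_+$ to be a positive constant $c$ on $J$. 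I then need an upper bound for $y$ on $J$ uniform in $u$, which is problematic since $u$ can be large positive. So in this case I would rather take the target to be $0$ on $\omega$ and large elsewhere, and use positivity differently. If this becomes messy, I fall back on the simpler strategy of working with the part where $y_0$ has mass outside $\overline\omega$, and treat the remaining case via the uniform lower bound argument. A cleaner alternative would be a target $y_1$ that is zero on $J$ and $1$ on $\omega$. Approximating it forces $y(\cdot,T)$ small on $J$ but large near $\partial\omega\cap\partial J$, and the continuity and Harnack estimate across the boundary point $r_0$ would contradict this. Making this quantitative, uniformly in $u$, is the delicate step.
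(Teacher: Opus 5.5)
Your Steps 1--2 are exactly the paper's proof: nonnegativity of $y$, comparison on a control-free interval $\widetilde\omega\subset(0,1)\setminus\omega$ with the Dirichlet solution $\hat y$ issued from $y_0|_{\widetilde\omega}$, strict positivity of $\hat y(\cdot,T)$, and the target $0$. Like you, the paper only gets a contradiction when $y_0\not\equiv 0$ on $\widetilde\omega$. It writes \emph{If $y_0\ge 0$ and $y_0\not\equiv0$ in $\widetilde\omega$} and never treats the alternative. So you were right to isolate the case $y_0=0$ a.e.\ on $(0,1)\setminus\omega$.

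However, your Step 3 is only a list of tentative strategies, so as written the proposal does not prove the statement. The gap is substantive: in that case the target $0$ genuinely lies in the closure of $F_{y_0,\omega}(T)$. To see this, take $u\equiv -m$ on $\omega$.
\begin{itemize}
\item The energy identity gives $\int_0^T\!\int_\omega y^2\le \|y_0\|^2_{L^2}/(2m)$ and an $m$-independent bound on $\int_0^T\!\int_0^1 y_x^2$.
\item By the one-dimensional trace inequality, $y(r_0,\cdot)$ and $y(r_0+l,\cdot)$ then tend to $0$ in $L^2(0,T)$.
\item Since the initial data vanish off $\omega$, $y\to 0$ in $L^2(Q_T)$. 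One way is to test the equation on $J$ with $x(r_0-x)$ to get $L^1$-smallness, then interpolate with the $H^1$ bound.
\item Because $t\mapsto\|y(t)\|_{L^2(0,1)}$ is nonincreasing when $u\le 0$, also $\|y(\cdot,T)\|_{L^2(0,1)}\to 0$.
\end{itemize}
Your \emph{cleaner alternative} (target $0$ on $J$ and $1$ on $\omega$) gives no contradiction either. $L^2$-approximation imposes no continuity across $r_0$, and a transition layer of width $\delta$ costs only $O(\sqrt{\delta})$. Such a layer is exactly what you get by first damping the solution and then regrowing it on $\omega$ with a very large positive $u$ on a very short final time interval.

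The missing case needs a different witness and a $u$-uniform estimate. Take $\phi\ge 0$, $\phi\not\equiv 0$, supported in an interval $I'$ with $\overline{I'}\subset J$, and vanishing on an interval $I''\subset J$ lying between $I'$ and $r_0$. Since $y$ starts from $0$ on $J$, there it is driven only by the nonnegative trace $h=y(r_0,\cdot)$. You would need $\sup_{I'}y(\cdot,T)\le C\inf_{I''}y(\cdot,T)$ with $C$ independent of $h$, hence of $u$. This keeps $\|y(\cdot,T)-\phi\|_{L^2(0,1)}$ bounded away from $0$. For the heat equation it follows from the Poisson kernel $P$ of $J$: $P(x',\tau)\le C P(x,\tau)$ for $x'\in I'$, $x\in I''$, uniformly in $\tau\in(0,T]$, because the Gaussian ratio tends to $0$ as $\tau\to 0$. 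For $y_t-(a(y)y_x)_x=0$ it is a boundary-Harnack-type estimate that neither you nor the paper supplies.

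A smaller point in Step 2: for $y_0\in L^2$ only, $\|z_x(t)\|_{L^\infty}$ can blow up like $t^{-3/4}$, which is not square-integrable near $t=0$. So your $w^-$--Gronwall argument does not close as stated. Either of these fixes it:
\begin{itemize}
\item an $L^1$ comparison for $y_t=\Phi(y)_{xx}$ with $\Phi(s)=\int_0^s a$, which needs no bound on $z_x$;
\item approximating $y_0$ by smooth data.
\end{itemize}
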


The main result is stated as follows.
\begin{theorem}\label{1.4-}
System (\ref{1.1}) is locally approximately null controllable in $L^2_+(0,1)$. That is, for any $  T > 0  $, $ \epsilon> 0  $, $  \sigma\in(0, 1)$, $y_d\in L^2_+(0,1)$, there exists a positive constant $ \gamma $, such that for any given initial datum $ y_0\in L^2_+(0,1)\cap C^{2+\frac{\sigma}{2}}([0,1])$, satisfying $|y_0|_{C^{2+\frac{\sigma}{2}}([0,1])} \leq \gamma$ and the first order compatibility condition, there exists a bilinear control $ u \in C^{\frac{\sigma}{2}, \frac{\sigma}{4}}(\overline{Q_T})   $ such that the corresponding solution  of (\ref{1.1}) satisfies
\begin{equation*}
\| u(T)-y_d \|_{L^2(0,1)} < \epsilon.
\end{equation*}
\end{theorem}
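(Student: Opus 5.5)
Throughout I read the conclusion as $\|y(\cdot,T)-y_d\|_{L^2(0,1)}<\epsilon$, where $y$ is the solution of (\ref{1.1}); the $u(T)$ in the statement looks like a typo. The method is to prescribe a target trajectory and read off the control from the equation. I would build a positive classical function $z$ on $\overline{Q_T}$ with three properties: $z(\cdot,0)=y_0$; on each time slab $z$ solves the uncontrolled equation $z_t=(a(z)z_x)_x$ outside the current support $\omega(t)$; and $z(\cdot,T)$ is $\epsilon$-close to $y_d$. Then I set
\[
u:=\frac{z_t-(a(z)z_x)_x}{z}\,\chi_{\omega(t)} .
\]
Uniqueness of classical solutions of (\ref{1.1}) gives $y=z$. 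The regularity $u\in C^{\frac{\sigma}{2},\frac{\sigma}{4}}(\overline{Q_T})$ will come from Schauder estimates on $z$, provided $z$ stays bounded away from $0$ on the control region. This is exactly why the statement is posed in the classical setting, with $y_0\in C^{2+\frac{\sigma}{2}}$ and the compatibility condition.

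The steps, in order, are as follows.

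First, I reduce the target. I approximate $y_d$ in $L^2$ by a smooth $\hat y_d$ which is strictly positive in $(0,1)$, vanishes linearly at $x=0,1$, and satisfies the compatibility conditions. The degenerate cases $y_0\equiv0$ or $y_d\equiv0$ reduce to, respectively, the zero solution or the decay step below.

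Second, on a short interval $[0,t_1]$ I use no control. The strong maximum principle for the quasilinear operator, and Hopf's lemma, give $\tilde y_{y_0}(\cdot,t_1)>0$ in $(0,1)$ with nonzero boundary slopes. The smallness $|y_0|_{C^{2+\sigma/2}}\le\gamma$ keeps $a(\tilde y)$ within a small perturbation of $a(0)$, so the $C^{2+\sigma/2,1+\sigma/4}$ norms stay controlled. The decay estimate for the uncontrolled system shows that the state is small and of definite sign.

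Third, on $[t_1,T]$ I take a piecewise constant $r\in PC[0,T]$ whose supports $\omega(t)$ sweep across $(0,1)$. On each piece I prescribe $z$ in $\omega(t)$ by a smooth interpolation, in time, between the current state and $\hat y_d$ (for instance a convex combination with a smooth cutoff in $t$). Outside $\omega(t)$ I let $z$ evolve freely, and I glue the two regions with smooth transition layers in $x$ inside $\omega(t)$. The control then absorbs the defect in the region where it acts. Because the supports move across the whole interval, every point of $(0,1)$ is eventually reached. By contrast, Theorem \ref{t1.1} shows that a static support cannot achieve this.

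The main obstacle is the third step. The free evolution outside $\omega(t)$ changes the state between sweeps, and the gluing must keep $z$ a $C^{2+\sigma/2,1+\sigma/4}$ function that is positive on $\omega(t)$. Since $z$ vanishes at $x=0,1$, the quotient defining $u$ is delicate near the boundary.

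To handle the boundary, I would first try to make the final support pieces touch $x=0$ and $x=1$. There I would keep $z/\tilde z$ smooth, using Hopf-type lower bounds $z\ge c\,x(1-x)$, so that $u$ is a ratio of two functions that both vanish linearly at the boundary.

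To handle the drift outside the supports, I would use short time slabs together with the smallness from the decay step. Free evolution over a short time moves the state only slightly in $L^2$. The accumulated error can then be kept below $\epsilon/2$ by taking enough sweeps, and $\gamma$ is chosen last, depending on $T$, $\epsilon$, $\sigma$ and $y_d$, so that all the Schauder constants remain uniform.
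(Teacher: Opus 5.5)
Your route is genuinely different from the paper's. The paper freezes $a(z)$ for $z$ in the ball $\mathcal D$ and takes a control for the linear problem (\ref{homo}) from Theorem \ref{t2.1}. It then closes with the Schauder bound (\ref{3.10}), the control bound (\ref{3.21}) and Kakutani's theorem; the smallness of $|y_0|_{C^{2+\frac{\sigma}{2}}([0,1])}$ serves only to get $\Lambda(\mathcal D)\subseteq\mathcal D$. Reading $u$ off a constructed trajectory needs no fixed point and no linear controllability result. It also needs no smallness: your $\gamma$ never genuinely enters, and the parameter that actually closes your argument is the slab length. A completed version would therefore give more. As written, though, it has gaps. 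First, $y_0\equiv0$ cannot be ``reduced to the zero solution''. In that case $y\equiv0$ for every $u$, so the conclusion is false whenever $\|y_d\|_{L^2(0,1)}\ge\epsilon$. The hypothesis $y_0\not\equiv0$ (as in Theorem \ref{t2.1}) must be added, and your positivity step needs it anyway.

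Second, the convex-combination interpolation does not produce a H\"older $u$.
\begin{itemize}
\item \emph{Switching times.} Take a switching time $s_j$ and the plateau $\{\theta_j=1\}$, where $z(\cdot,s_j)=\hat y_d$. There the defect $z_t-(a(z)z_x)_x$ from the left is $-(a(\hat y_d)\hat y_d')'$ (for a cutoff flat at $s_j$), which is generically nonzero. From the right it is $0$: there is no control outside $\omega_{j+1}$, and inside, the next interpolation starts from $z(\cdot,s_j)=\hat y_d$. So $z_t$ jumps, the classical framework you invoke (Schauder for $z$, uniqueness of classical solutions) is lost, and $u$ jumps on $\{\theta_j=1\}\cap\omega_{j+1}$.
\item \emph{Boundary.} At $x=0$ it is worse. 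Suppose $z=(1-\lambda(t))w+\lambda(t)\hat y_d$ near $0$, with $w$ and $\hat y_d$ both satisfying $(a(v)v_x)_x(0)=0$. Then $(z_t-(a(z)z_x)_x)(0,t)=a'(0)\lambda(1-\lambda)(w_x(0,t)-\hat y_d'(0))^2$, which is nonzero because of the quasilinearity. Hence $u\sim C/x$, and no Hopf bound can rescue it.
\end{itemize}

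The remedy is to prescribe the forcing rather than the state. On the slab $(s_{j-1},s_j)$ solve $z_t-(a(z)z_x)_x=F_j:=\theta_j(x)\eta_j'(t)(\hat y_d-z(\cdot,s_{j-1}))$. Here $\theta_j\in C_c^\infty(\omega_j\cap(\delta,1-\delta))$, and $\eta_j$ rises from $0$ to $1$ with $\eta_j'$ compactly supported in the slab. Then $u=F_j/z$ is H\"older, vanishes near switching times and near $x=0,1$, and satisfies $u=u\chi_{\omega(t)}$. The strips $(0,\delta)\cup(1-\delta,1)$ cost only $O(\sqrt\delta)$ in $L^2$, by the maximum-principle $L^\infty$ bound.

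Finally, drift is not reduced by ``taking enough sweeps'', since each sweep adds drift. Fix $\hat y_d$, $\delta$, the cutoffs and a finite covering of $[\delta,1-\delta]$ by the plateaus first. Then run all $J$ slabs inside $[T-J\tau,T]$ and let $\tau\to0$.
\begin{itemize}
\item Apply the argument of Lemma \ref{t2.0} to $Y:=z-z(\cdot,s_{j-1})-\eta_j\theta_j(\hat y_d-z(\cdot,s_{j-1}))$, which satisfies $Y_t=(a(z)z_x)_x$ and $Y(\cdot,s_{j-1})=0$. This gives drift $O(\sqrt\tau)$ per slab, provided you carry uniform $H^1$ bounds through the slabs.
\item The lower bound $z\ge c_\delta>0$ on $\operatorname{supp}F_j$ then comes from $\tilde y_{y_0}(\cdot,T)>0$ and $\hat y_d>0$ on $[\delta,1-\delta]$, plus $L^\infty$-smallness of the corrections.
\end{itemize}
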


The rest of this paper is organized as follows. In Section \ref{sec3.1}, we prove the lack of controllability for the system with a static control support. Section \ref{sec3.2} is devoted to proving the constrained approximate controllability of the linearized system via an additive, nonnegative, locally distributed control. In Section \ref{sec3.3}, we analyze the multiplicative approximate controllability of the linearized system. Finally, Section \ref{sec3} contains the proof of our main result, namely, Theorem \ref{1.4-}.

\section{Noncontrollability of system \eqref{1.1} in $L^2_+(0,1)$}\label{sec3.1}
In this section, we show that system \eqref{1.1} is not approximately controllable in $L^2_+(0,1)$ at any time $T > 0$ when the control acts on a static subinterval of $(0,1)$.
To prove this result, we first consider quasilinear parabolic equations of the form:
\begin{equation}\label{e2.0}
\left\{\begin{array}{ll}
y_{t}-(a(y)y_{x})_x=u(x,t)y(x,t)+v(x,t) &\text{in}\ Q_T,\\[2mm]
y(x,0)=y_0(x)&\text{in}\ (0,1),
\end{array}\right.
\end{equation}
where $a$ satisfies the condition \eqref{c1}.

With respect to \eqref{e2.0}, we present the following lemma:
\begin{lemma}[Maximum Principle]\label{l1.2}
Let us assume that $y_0 \in L^2(0, 1)$, $v \in L^2(Q_T)$ and $u \in L^\infty(Q_T)$.
Let $y \in L^2(0, T; H^1(0, 1)) \cap C([0, T]; L^2(0, 1))$ be a solution of equation \eqref{e2.0}. Then the following properties hold:

\begin{enumerate}
 \item[(i)]  If $y_0 \ge 0$ in $(0, 1)$, $v \geq 0$ in $Q_T$, $y(0,t)\ge 0$ and $y(1,t)\ge 0$ in $(0,T)$, then $y(x, t) \ge 0$ a.e. in $Q_T$;

 \item[(ii)] if $v \equiv 0$, $u \leq 0$ in $Q_T$, $y(0, \cdot)$, $y(1, \cdot)\in L^\infty(0, T )$ and $y_0 \in L^\infty(0, 1)$, then $\sup\limits_{{Q_T}} |y(x, t)|\leq\sup\limits_{{\Gamma_T}} |y(x, t)|$, where $\Gamma_T = ([0,1] \times \{0\}) \cup (\{0,1\} \times (0,T]) $;

 \item[(iii)]  If $y \in L^2(0, T; H_0^1(0, 1))$ with $y_0 \ge 0$ in $(0, 1)$ ($y_0 \not\equiv 0$), $v \geq 0$ in $Q_T$ and $u = u(x) \le 0$ in $(0, 1)$, then $y(\cdot, t) > 0$ in $(0, 1)$ for all $t \in (0, T]$.
\end{enumerate}
\end{lemma}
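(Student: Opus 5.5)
The plan is to linearize the quasilinear operator by freezing the coefficient: set $b(x,t):=a(y(x,t))$, so that $y$ solves the linear equation $y_t-(b\,y_x)_x=uy+v$ with $b\ge a_0:=\inf a>0$ measurable and bounded. Boundedness of $b$ needs $y\in L^\infty$. In (ii) this follows from the conclusion itself, provided we run the argument by truncation. In (i) and (iii), if $y$ is not known to be bounded, we use that $a$ is bounded below, and (by \eqref{c1} on any range actually attained) we only ever test against truncations, for which $b\ge a_0$ is all that is needed. All three items then become statements about linear divergence-form parabolic equations with a uniformly positive diffusion coefficient, and we prove them by Stampacchia-type energy estimates.

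\textbf{(i).} Test the equation with $y^-=\max(-y,0)$. This is admissible in $L^2(0,T;H^1_0(0,1))$ because $y(0,t),y(1,t)\ge0$, so $y^-$ vanishes on the lateral boundary. Writing $y=-y^-$ on $\{y<0\}$, we get
$$\tfrac12\frac{d}{dt}\|y^-\|^2+\int_0^1 b|y^-_x|^2\,dx=\int_0^1 u|y^-|^2\,dx-\int_0^1 v y^-\,dx\le\|u\|_\infty\|y^-\|^2 .$$
The step from the pairing to the time derivative uses the Lions--Magenes/Steklov argument, which is legitimate for $y\in L^2H^1$ with $y_t\in L^2H^{-1}$. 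Gronwall's inequality together with $y^-(0)=0$ gives $y^-\equiv0$.

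\textbf{(ii).} Let $M=\sup_{\Gamma_T}|y|$ and test with $(y-M)^+$, which vanishes on the lateral boundary and at $t=0$. The source contributes $\int_0^1 u\,y\,(y-M)^+\,dx\le0$, because $u\le0$ and $y>M\ge0$ on the support of $(y-M)^+$. Hence $\frac{d}{dt}\|(y-M)^+\|^2\le0$, so $(y-M)^+\equiv0$. Applying the same argument to $-y$ bounds $y$ from below and gives $|y|\le M$. This also shows a posteriori that $b=a(y)$ is bounded.

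\textbf{(iii).} Part (i) gives $y\ge0$. Since $v\ge0$, $y$ is a supersolution of $w_t-(bw_x)_x-uw=0$. Because $u\le0$ is time-independent, the zero-order term can be handled in one of two ways: pass to $z=e^{\lambda t}y$ with $\lambda\ge\|u\|_\infty$ to get $z_t-(bz_x)_x+(\lambda-u)... \ge$ a nonnegative potential term, or simply keep the bounded potential. Either way, the weak Harnack inequality (Moser / Trudinger for linear divergence-form equations with bounded measurable $b\ge a_0$ and bounded potential) shows that a nonnegative supersolution that is positive somewhere in $(0,1)\times(t_1,t_2)$ is positive on compact subsets of $(0,1)\times(t_2,T]$. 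A chaining argument along the interval gives positivity up to each fixed interior point.

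To start the chain we need $y(\cdot,t)\not\equiv0$ for small $t$. This follows from continuity in $C([0,T];L^2)$ together with $y_0\not\equiv0$. Positivity on an open set near $t=0$, propagated forward in time, then gives $y(\cdot,t)>0$ in $(0,1)$ for every $t\in(0,T]$. An alternative is to compare with the solution of the linear problem with $v=0$ and the same frozen $b$, and use its strong maximum principle.

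\textbf{Main obstacle.} The main obstacle is (iii). The strong maximum principle for weak solutions requires the weak Harnack inequality, and we must check that the frozen coefficient $b=a(y)$ is bounded, which is where the boundedness of $y$ enters. Where $y_0\in L^\infty$ this follows from (ii)-type bounds; otherwise one argues locally in time after using parabolic smoothing. Here the time-independence of $u$ is not essential, only its boundedness.
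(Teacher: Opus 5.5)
Your proof is correct and follows essentially the paper's route. The paper only says that (i)--(iii) follow from a fixed-point linearization plus the linear maximum principle (citing Lady\v{z}enskaja--Solonnikov--Ural'ceva, Ch.~III \S7, and Prilepko--Orlovsky--Vasin); freezing $b=a(y)$ at the given solution is exactly that linearization, and your truncation and weak-Harnack arguments supply what the paper cites. Two small corrections: the exponential weight that makes the potential nonnegative is $e^{-\lambda t}y$, not $e^{\lambda t}y$ (it is not needed anyway, since $u\le0$), and in (iii) the $L^\infty$ bound on $y$ required for the weak Harnack inequality does not come from (ii), which needs $v\equiv0$, but from the same truncation argument with source $v\in L^2(Q_T)$, which is within the De Giorgi--Stampacchia range in one space dimension.
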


By the fixed point technique and the maximum principle for linear parabolic equations (see \cite[Ch. III, \S7, pp. 181-191]{LSU} and \cite[pp. 21-23]{POV}), one can prove the results of Lemma \ref{l1.2}. Regarding the classical setting, see also \cite[Ch. I, \S2, pp. 11-25]{LSU} and \cite[pp. 121-122]{Gu}.

Moreover, the following comparison principle is valid:
\begin{lemma}\label{l1.1}
   Let $  y_1 $ and $  y_2 $ be two solutions of equation \eqref{e2.0} such that
    $  y_1(0, t) \leq y_2(0, t)   $ and $   y_1(1, t) \leq y_2(1, t)   $ in $  (0, T)   $.
        Then $  y_1 \leq y_2  $ in $  Q_T   $.
\end{lemma}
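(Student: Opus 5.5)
The plan is to reduce the comparison of the two quasilinear solutions to a maximum principle for a \emph{linear} equation satisfied by their difference, and then invoke Lemma \ref{l1.2}. Implicitly, $y_1$ and $y_2$ solve \eqref{e2.0} with the same $u\in L^\infty(Q_T)$, the same $v$, and the same initial datum $y_0$ (or at least $y_1(\cdot,0)\le y_2(\cdot,0)$). They are assumed regular enough for the computations below, e.g.\ in $L^2(0,T;H^1(0,1))\cap C([0,T];L^2(0,1))$ with $y_{i,x}$ bounded, as is the case for the classical solutions used later.

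First, set $w:=y_1-y_2$. Since $a\in C^2$, I will use the Kirchhoff-type transformation $A(s):=\int_0^s a(\tau)\,d\tau$. Then $a(y_i)y_{i,x}=(A(y_i))_x$, and
\begin{equation*}
A(y_1)-A(y_2)=b(x,t)\,w,\qquad b(x,t):=\int_0^1 a\big(\theta y_1+(1-\theta)y_2\big)\,d\theta .
\end{equation*}
By \eqref{c1}, $b\ge \inf a>0$ and $b$ is bounded. Subtracting the two equations gives
\begin{equation*}
w_t-(b\,w)_{xx}=u\,w \quad\text{in } Q_T,
\end{equation*}
with $w(\cdot,0)\le 0$ and $w(0,t)\le0$, $w(1,t)\le 0$. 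Alternatively, and more usefully for the energy method, I will write $a(y_1)y_{1,x}-a(y_2)y_{2,x}=a(y_1)w_x+\big(a(y_1)-a(y_2)\big)y_{2,x}=a(y_1)w_x+c\,w$, where $c:=y_{2,x}\int_0^1a'(\theta y_1+(1-\theta)y_2)\,d\theta$ is bounded because $a'$ is bounded and $y_{2,x}$ is bounded. This gives the divergence-form linear equation
\begin{equation*}
w_t-\big(a(y_1)w_x+c\,w\big)_x=u\,w .
\end{equation*}

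The key step is to show $w^+\equiv0$. Since the boundary values of $w$ are nonpositive, $w^+(\cdot,t)\in H_0^1(0,1)$, so I may test the linear equation with $w^+$. This yields
\begin{equation*}
\frac12\frac{d}{dt}\|w^+\|^2+\int a(y_1)|w^+_x|^2=-\int c\,w^+w^+_x+\int u\,|w^+|^2 .
\end{equation*}
Young's inequality absorbs the first term on the right into $\tfrac12\inf a\,\|w^+_x\|^2$, at the cost of $C\|w^+\|^2$. Hence $\frac{d}{dt}\|w^+\|^2\le C(\|c\|_\infty,\|u\|_\infty)\|w^+\|^2$, and Gronwall's inequality with $w^+(\cdot,0)=0$ gives $w^+\equiv0$, i.e.\ $y_1\le y_2$. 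Equivalently, one could argue that this linear equation falls under the linear maximum principle underlying Lemma \ref{l1.2}(i), applied to $-w$ with source $0$.

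The main obstacle is justifying that $c$ is bounded, i.e.\ controlling the cross term coming from the nonlinearity in the principal part; this requires $y_{2,x}\in L^\infty$, or at least $L^\infty(0,T;L^\infty)$-type bounds. If only $L^2(0,T;H^1)$ regularity is available, I would instead use the test function $T_\varepsilon(w)^+/\varepsilon$ (a truncation of $w^+$ at level $\varepsilon$) in the spirit of the uniqueness proofs in \cite{LSU}. With that test function the term $(a(y_1)-a(y_2))y_{2,x}$ is controlled by $\|a'\|_\infty\,\varepsilon\,|y_{2,x}|$ on the set $\{0<w<\varepsilon\}$, and letting $\varepsilon\to0$ gives $|\{w>0\}|=0$. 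In the classical setting used in the rest of the paper, however, the bounded-$c$ argument suffices.
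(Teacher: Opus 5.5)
Your argument is correct. The paper gives no proof of Lemma \ref{l1.1}. It asserts the lemma right after Lemma \ref{l1.2}, whose own justification is only a pointer to the linear maximum principle in LSU, POV and Gu. Your reduction of $w=y_1-y_2$ to the linear divergence-form equation $w_t-(a(y_1)w_x+cw)_x=uw$, followed by the $w^+$ energy estimate and Gronwall, is the standard proof of that weak maximum principle. So you are filling in the route the paper leaves implicit. You are also right that the statement only holds when both solutions share the same $u$, $v$ and $y_0$.

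One correction concerns your closing remark. The only place the paper uses this lemma is the proof of Theorem \ref{t1.1}, where $y_0\in L^2_+(0,1)$ and $u_k\in L^\infty(Q_T)$. Those solutions are weak, and their gradients need not be bounded near $t=0$. There the bounded-$c$ argument is unavailable, and your truncation variant is the one actually needed, not the classical-setting version.

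The truncation variant does work. Test with $\min(w^+,\varepsilon)/\varepsilon$. On $\{0<w<\varepsilon\}$ the cross term is at most $\|a'\|_\infty|y_{2,x}||w_x|$. Young's inequality splits this into $\frac{\inf a}{2\varepsilon}|w_x|^2$, absorbed by the diffusion term, plus $\frac{\varepsilon\|a'\|_\infty^2}{2\inf a}|y_{2,x}|^2$, which vanishes as $\varepsilon\to0$ because $y_{2,x}\in L^2(Q_T)$.

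The term $\iint uw\,\min(w^+,\varepsilon)/\varepsilon$ does not vanish in the limit, however. So you do not get $|\{w>0\}|=0$ directly. Instead you obtain $\int_0^1 w^+(x,t)\,dx\le\|u\|_{L^\infty(Q_T)}\int_0^t\int_0^1 w^+\,dx\,ds$ and conclude by Gronwall in $L^1$.
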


Now, we are in a position to give a proof of Theorem \ref{t1.1}.

\emph{Proof of Theorem \ref{t1.1}.}  We will show that the set $F_{y_0,\omega}(T)$ (see \eqref{1.4-}) is not dense in $L^2_+(0,1)$. To prove this, we argue by contradiction. Let $y_d\in L^2_+(0,1)$. Suppose that there exists a sequence
$\{u_k\}_{k=1}^\infty \subset L^\infty(Q_T)$ such that:
\begin{equation*}
y_{y_0,u_k,\omega}(\cdot,T)\rightarrow y_d\quad \text{in}\ L^2_+(0,1)  \ \text{as}  \  k\rightarrow+\infty.
\end{equation*}
Put $y_d=0$. Then for any $\widetilde{\omega} = (a, b) \subset (0,1)\setminus\omega$, we have
\begin{equation*}
y_{y_0,u_k,\omega}(\cdot,T)\mid_{\widetilde{\omega}}\rightarrow 0\quad \text{in}\ L^2_+(\widetilde{\omega})  \ \text{as}  \  k\rightarrow+\infty.
\end{equation*}
It follows from Lemma \ref{l1.2} (i) that $y_{y_0,u_k,\omega}\ge 0$ in $Q_T$. Let $y_{y_0,u_k,\omega}\mid_{\widetilde{\omega}}$ be the solution to the following parabolic problem:
\begin{equation*}
\left\{\begin{array}{ll}
y_{t}-(a(y)y_{x})_x=0,  &\text{in}\ \widetilde{\omega}\times(0,T),\\[2mm]
y(a,t)=h_0(t), \ y(b,t)=h_1(t), &\text{in}\ (0,T),\\[2mm]
y(x,0)=y_0(x),&\text{in}\ \widetilde{\omega},
\end{array}\right.
\end{equation*}
where $h_0(t)$ and $h_1(t)$ are nonnegative boundary values.

Denote by $\hat{y}$ the solution to the homogeneous parabolic problem:
\begin{equation*}
\left\{\begin{array}{ll}
\hat{y}_{t}-(a(\hat{y})\hat{y}_{x})_x=0,  &\text{in}\ \widetilde{\omega}\times(0,T),\\[2mm]
\hat{y}(a,t)=0, \ \hat{y}(b,t)=0, &\text{in}\ (0,T),\\[2mm]
\hat{y}(x,0)=y_0(x),&\text{in}\ \widetilde{\omega}.
\end{array}\right.
\end{equation*}
If $y_0 \geq 0$ and $y_0 \not\equiv 0$ in $\widetilde{\omega}$, then Lemma \ref{l1.2} (iii) implies that $\hat{y}(\cdot,T)>0$ in $\widetilde{\omega}$.

Applying Lemma \ref{l1.1} in $\widetilde{\omega}\times(0,T)$, we see that for any $k\in \mathbb N_+$,
\begin{equation*}
y_{y_0,u_k,\omega}(\cdot,T)\mid_{\widetilde{\omega}}\geq\hat{y}(\cdot,T)>0\quad \text{in}\ \widetilde{\omega}.
\end{equation*}
Thus, we arrive at the contradiction
\begin{equation*}
0=\lim\limits_{k\rightarrow+\infty}\|y_{y_0,u_k,\omega}(\cdot,T)\|_{L^2(\widetilde{\omega})}\geq\|\hat{y}(\cdot,T)\|_{L^2(\widetilde{\omega})}>0,
\end{equation*}
which proves the original claim. \endpf

\section{Approximately controllability of linear parabolic equations by additive control}\label{sec3.2}
This section addresses the additive nonnegative controllability problem with mobile support:
\begin{equation}\label{2.0+}
\left\{\begin{array}{ll}
y_{t}-(by_{x})_x = v(x, t)\chi_{\omega(t)}(x), & \text{in } Q_T, \\[2mm]
y(0, t) = y(1, t) = 0, & \text{in } (0, T), \\[2mm]
y(x, 0) = y_0(x), & \text{in } (0, 1),
 \end{array}\right.
\end{equation}
where  $b\in C^{1,1}(\overline{Q_T})$ satisfies for some constant $\rho> 0$,
\begin{equation}\label{b}
b(x, t)\geq \rho \quad\forall(x, t)\in \overline{Q_T},
\end{equation}
and $\chi_{\omega(t)}$ denotes the characteristic function with moving support given by  \eqref{r}.

We establish the following approximate controllability result for \eqref{2.0+}:
\begin{theorem}\label{l1.3}
Let us assume that ${y}_0\in L^2_+(0,1)$, $y_0  \not\equiv 0$.
Then, for any $T > 0$, the set
\begin{equation*}
F^+_{1,0,mb}(T) :=\left\{y_{y_0,v,r}(\cdot, T) : v \in L^2_+(Q_T)\ \text{and} \ r\in PC[0, T]\right\}
\end{equation*}
is dense in $L^2_+(0,1)$.
\end{theorem}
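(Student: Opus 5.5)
We follow the strategy of \cite{FK}, adapted to the variable coefficient $b(x,t)$. Fix $y_d\in L^2_+(0,1)$ and $\epsilon>0$. Since nonnegative smooth functions with compact support in $(0,1)$ are dense in $L^2_+(0,1)$, we may assume $y_d\in C_c^\infty(0,1)$, $y_d\ge 0$. The idea is to split $[0,T]$ into two phases. On $[0,T-\delta]$ we use no control, or a harmless one. On a short final window $[T-\delta,T]$ we use a nonnegative control with piecewise constant mobile support, built so that the state at time $T$ approximates $y_d$. Nonnegativity of the final state is automatic from Lemma \ref{l1.2}(i), applied with $a$ replaced by $b$.

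\emph{Step 1 (decay of the free solution).} Let $\tilde y$ solve \eqref{2.0+} with $v\equiv0$. Multiplying by $\tilde y$ and using \eqref{b} with Poincar\'e's inequality gives
\[
\|\tilde y(t)\|_{L^2}\le e^{-\rho\pi^2 t}\|y_0\|_{L^2}.
\]
This estimate is not small for a fixed $T$, so instead we will use superposition. By linearity,
\[
y_{y_0,v,r}(T)=\tilde y(T)+y_{0,v,r}(T).
\]
By Lemma \ref{l1.2}(iii) and parabolic regularity, $\tilde y(\cdot,T)$ is smooth and positive in $(0,1)$ and vanishes at the boundary. So it suffices to reach approximately the target $g:=y_d-\tilde y(T)$ from zero initial data using a nonnegative control. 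The difficulty is that $g$ may be negative somewhere, and a nonnegative control cannot remove mass.

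\emph{Step 2 (handling the sign).} We exploit the positivity of $\tilde y$ instead. For $0<\theta<T$, the free evolution on $[T-\theta,T]$ maps $\tilde y(T-\theta)$ to $\tilde y(T)$. As $t$ grows the free solution is small, but never small enough by itself. The plan that \cite{FK} uses, which we mimic, is the following. We split $(0,1)$ into finitely many intervals $I_j$, each of length less than $l$, with $I_j\subset\omega_j:=(r_j,r_j+l)$. On consecutive short time windows $[t_{j-1},t_j]$ we place the support at $\omega_j$ and apply the control
\[
v=\lambda_j\,\partial_t\big(\eta(t)\,\phi_j(x)\big)-\big(b(\eta\phi_j)_x\big)_x,
\]
chosen so that the controlled solution builds up the piece $\phi_j$ of the target. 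Here $\phi_j=y_d\psi_j$, with $\{\psi_j\}$ a smooth partition of unity subordinate to $\{I_j\}$.

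Because $v$ must be nonnegative, we instead use the \emph{constructive} approach. Take $w(x,t)=\alpha(t)z(x)$ with $\alpha$ increasing, and let the control be supported where $w$ lives. On each window, the difference between the controlled state and the free evolution of the previous state solves the equation with a nonnegative source. We then argue by duality: approximate controllability with nonnegative controls in the cone $L^2_+$ is equivalent, by the Hahn--Banach separation theorem for convex cones, to the following property. If $\varphi$ solves the adjoint backward equation $-\varphi_t-(b\varphi_x)_x=0$ with $\varphi(T)=\varphi_T$, and $\varphi\le0$ on $\bigcup_t \omega(t)\times\{t\}$ for every admissible $r$, then
\[
\langle \varphi_T, z-\tilde y(T)\rangle\le 0
\]
for all $z\in L^2_+$.

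Choosing $r$ to sweep the whole interval during $[T-\delta,T]$ and letting $\delta\to0$, continuity of $\varphi$ at $t=T$ forces $\varphi_T\le0$ on $(0,1)$. Then $\langle\varphi_T,z\rangle\le0$ for every $z\ge0$ holds trivially, and we must exclude a separating functional, that is, obtain a contradiction with strict separation. This is where the positivity $\tilde y(T)>0$ and the interior smoothness enter. Our plan is therefore to recast Step 2 as a cone-duality statement.

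\emph{Step 3 (expected obstacle).} The main obstacle is closing the duality argument: showing that the reachable set, which is a translate $\tilde y(T)+\mathcal C$ of the convex cone $\mathcal C$ generated by nonnegative mobile controls, is dense in $L^2_+$ rather than only in a shifted cone. We would first try to show that $\overline{\mathcal C}\supset L^2_+$ and moreover that $\tilde y(T)$ can be approximately cancelled. For the latter, we would replace the free phase by one in which the support moves quickly. The time interval is divided into $N$ subintervals, on each of which $r$ is constant. Using time-dependent rescaling as in \cite{FK}, the continuous dependence of solutions on $b\in C^{1,1}$, and $\lambda_1(b)\ge\rho\pi^2$, we would show that the target $y_d+\tilde y$-type corrections can be absorbed. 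If this fails, the fallback is to first prove density of $\{y(T)\}$ in $\{z\ge \tilde y(T)\}$ and then use that $\tilde y(T)$ can be made arbitrarily small. This is done by letting the state evolve freely on a long initial segment, which requires rescaling time through $b$ and is the delicate point.
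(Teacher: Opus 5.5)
There is a genuine gap: the obstruction you isolate in Step 1 is fatal, not just a difficulty. For $v\ge0$ the maximum principle gives $y_{0,v,r}\ge0$. So every reachable state satisfies $y_{y_0,v,r}(\cdot,T)=\tilde y(\cdot,T)+y_{0,v,r}(\cdot,T)\ge\tilde y(\cdot,T)$, and $\tilde y(\cdot,T)>0$ in $(0,1)$. Take $y_d=0$, or any $y_d\in L^2_+(0,1)$ lying below $\tilde y(\cdot,T)$ on a set of positive measure. Then $\|y_{y_0,v,r}(\cdot,T)-y_d\|_{L^2(0,1)}\ge\|(\tilde y(\cdot,T)-y_d)^+\|_{L^2(0,1)}>0$, uniformly in $(v,r)$. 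Hence the cancellation of $\tilde y(\cdot,T)$ planned in Step 3 cannot be achieved by any motion of the support, any rescaling, or any duality argument. Your Step 2 already shows this: a separating $\varphi_T\le0$ with $\langle\varphi_T,y_d-\tilde y(\cdot,T)\rangle>0$ produces no contradiction. The fallback fails for the same reason: $T$ is fixed, $\tilde y(\cdot,T)$ does not depend on the control, and $e^{-\rho\pi^2T}$ is only an upper bound. The first half of your fallback, density in $\tilde y(\cdot,T)+L^2_+(0,1)$, is in fact the correct statement when $y_0\not\equiv0$; it is the second half that cannot work.

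What you are missing is that, despite the hypothesis $y_0\not\equiv0$, the theorem concerns the zero initial state. The subscript $0$ in $F^+_{1,0,mb}(T)$ indicates this, and the paper's proof builds $\tilde y=\sum_j\tilde y_{0,\hat v_j,\omega_j}$ and concludes $\tilde y=\tilde y_{0,v,r}$. The nonzero-data version then follows from your superposition identity, in the form used in Lemma \ref{t2.2}.

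For zero data, the paper argues as follows:
1. It splits $y_d$ into $M$ pieces supported in consecutive intervals of length at most $l$, and approximates each by a nonnegative $H^1_0$ function.
2. It reaches each piece with a nonnegative control on a fixed $\omega_j$ via Lemma \ref{t1.3}. There Hahn--Banach is legitimate, because with static support the reachable set is a convex cone, and the adjoint identity forces $q(T)\le0$.
3. It switches these controls on in disjoint windows $(T-2\delta_j,T-\delta_j)$ accumulating at $T$, and uses Lemma \ref{t2.0} to control the free evolution after each window.
4. It sums by linearity, with $r$ jumping through $0,l,\dots,1-l$.

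Your Step 2 duality would also work for zero data, but only once you prove that the closure of the mobile reachable set is convex. This is not automatic, since only one interval of length $l$ is allowed at each time. A chattering argument does it: alternate $2v_1\chi_{\omega_1(t)}$ and $2v_2\chi_{\omega_2(t)}$ on ever shorter sub-windows, then use weak convergence in $L^2(Q_T)$ together with compactness of $v\mapsto y_{0,v}(\cdot,T)$ into $L^2(0,1)$. A separating $g$ then has adjoint state $\varphi\le0$ on $(c,c+l)\times(0,T)$ for every $c\in[0,1-l]$. Hence $g=\varphi(\cdot,T)\le0$, contradicting $\langle g,y_d\rangle>0$.
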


To prove Theorem \ref{l1.3}, we first establish the following lemma.
\begin{lemma}\label{t2.0}
For $y_0 \in H_0^1(0,1) \cap L^2_+(0,1)$, denote by $\tilde{y}$ the unique solution of \eqref{2.0+} with $v= 0$. Then the following estimate holds:
\begin{equation}\label{3.3}
\| \tilde{y}(\cdot,T)-y_0 \|_{L^{2}(0, 1)} \leq \sqrt{|b|_{C(\overline{Q_T})}T}\| y_{0,x} \|_{L^{2}(0, 1)}.
\end{equation}
\end{lemma}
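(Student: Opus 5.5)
We argue by a direct energy estimate on the difference $w:=\tilde y-y_0$, testing the equation against $w$ itself rather than against $\tilde y_t$. Testing against $\tilde y_t$ would produce terms involving $b_t$, because $b$ depends on time, and those terms would not give the clean constant in \eqref{3.3}.

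\emph{Regularity.} Since $y_0\in H_0^1(0,1)$ is time-independent, $w$ has zero Dirichlet trace and $w(\cdot,0)=0$. By standard linear parabolic theory with $b\in C^{1,1}(\overline{Q_T})$ and $b\ge\rho$, we have
\[
\tilde y\in L^2(0,T;H_0^1(0,1))\cap C([0,T];L^2(0,1)),\qquad \tilde y_t\in L^2(0,T;H^{-1}(0,1)).
\]
Hence $w$ belongs to the same spaces, $w_t=\tilde y_t$, and the Lions--Magenes lemma gives
\[
\tfrac12\tfrac{d}{dt}\|w\|_{L^2(0,1)}^2=\langle \tilde y_t,w\rangle \quad\text{a.e. in } (0,T).
\]

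\emph{Energy identity.} Pair the equation $\tilde y_t=(b\tilde y_x)_x$ with $w(\cdot,t)\in H_0^1(0,1)$ and integrate by parts. The boundary terms vanish since $w(0,t)=w(1,t)=0$, so
\[
\tfrac12\tfrac{d}{dt}\|w\|_{L^2}^2=-\int_0^1 b\,\tilde y_x(\tilde y_x-y_{0,x})\,dx=-\int_0^1 b\,\tilde y_x^2\,dx+\int_0^1 b\,\tilde y_x\,y_{0,x}\,dx.
\]

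\emph{Absorbing the cross term.} Since $b>0$, Young's inequality gives $b\,\tilde y_x y_{0,x}\le b\,\tilde y_x^2+\tfrac14 b\,y_{0,x}^2$. The cross term is therefore absorbed by the good term, and
\[
\tfrac12\tfrac{d}{dt}\|w\|_{L^2}^2\le \tfrac14|b|_{C(\overline{Q_T})}\|y_{0,x}\|_{L^2}^2 .
\]

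\emph{Conclusion.} Integrating over $(0,T)$ and using $w(0)=0$ yields
\[
\|w(T)\|_{L^2}^2\le \tfrac12 |b|_{C(\overline{Q_T})}\,T\,\|y_{0,x}\|_{L^2}^2 .
\]
This is even slightly stronger than \eqref{3.3}; taking square roots gives \eqref{3.3}.

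The computation itself is elementary. The only step that needs care is the justification of the energy identity at the weak-solution level, namely the chain rule for $\|w\|^2$ with $w_t\in L^2(H^{-1})$. If one prefers to avoid it, one can regularize $y_0$ in $H_0^1$, use smooth classical solutions, and pass to the limit, since both sides of the estimate are continuous in $y_0$ in the $H^1$ norm.
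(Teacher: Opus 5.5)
Your proof is correct and is essentially the paper's argument. The paper also sets $Y=\tilde y-y_0$ and tests $Y_t-(bY_x)_x=(by_{0,x})_x$ against $Y$. It then absorbs the cross term $-\int_0^1 b\,y_{0,x}Y_x\,dx$ with Cauchy's inequality and integrates in time. Writing the energy identity in terms of $\tilde y_x$ only gives you the slightly better factor $\tfrac12$ under the square root. Your justification of the energy identity (via Lions--Magenes or by regularizing $y_0$) fills in a step the paper leaves implicit.
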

\emph{Proof of Lemma \ref{t2.0}.} Define $Y(x, t)=\tilde{y}(x, t)-y_0(x) $. Then $Y $ satisfies the initial-boundary value problem
\begin{equation}\label{28}
\left\{\begin{array}{ll}
Y_{t}-(bY_{x})_x = (by_{0,x})_x, & \text{in } Q_T, \\[2mm]
Y(0, t) = Y(1, t) = 0, & \text{in } (0, T), \\[2mm]
Y(x, 0) = 0, & \text{in } (0, 1).
 \end{array}\right.
\end{equation}
Multiplying both sides of the first equation of \eqref{28} by $Y$ and integrating over $(0, 1)$,  we obtain
\begin{equation*}
\frac{d}{2dt}\int^1_0 Y^2dx+\int^1_0bY^2_{x} dx= -\int^1_0by_{0,x}Y_xdx.
\end{equation*}
Combining (\ref{b}) with the Cauchy inequality, we find
\begin{equation*} \label{6}
\frac{d}{dt}\int^1_0 Y^2dx\leq \int^1_0by^2_{0,x}dx.
\end{equation*}
Since $ Y(x, 0) = 0$, it follows that
\begin{equation*}
 \| Y(\cdot,T) \|_{L^{2}(0, 1)} \leq \sqrt{|b|_{C(\overline{Q_T})}T}\| y_{0,x} \|_{L^{2}(0, 1)},
\end{equation*}
which completes the proof of Lemma \ref{t2.0}. \endpf

\begin{remark}
Estimate \eqref{3.3} implies that $\tilde{y}(\cdot, T)$ is close to $y_0$ in the $L^{2}$ norm provided the time $T$ is sufficiently small.
\end{remark}

Now, consider the following controlled linear parabolic equation
\begin{equation}\label{2.2}
\left\{\begin{array}{ll}
y_t - (by_x)_x = v(x, t) & \text{in }\omega\times(0,T), \\[2mm]
y = 0 & \text{on } \partial\omega\times(0,T), \\[2mm]
y(0) = y_0 & \text{in } \omega,
\end{array}\right.
\end{equation}
where $  \omega  $ is a fixed proper subinterval of $ (0,1)   $.

We have the following regional nonnegative controllability result for \eqref{2.2}.
\begin{lemma}\label{t1.3}
Let $y_0\in L^2(\omega)$ and denote by $y(\cdot;v)$ the solution of \eqref{2.2}. Then, for any $T > 0$, the set
$
F :=\left\{y(T;v) :  v \in L^2_+(\omega\times(0,T))\right\}
$
is dense in $y(T;0)+L^2_+(\omega)$.
\end{lemma}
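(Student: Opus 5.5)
By linearity, $y(T;v)=y(T;0)+z(T;v)$, where $z$ solves \eqref{2.2} with $y_0=0$. Lemma \ref{t1.3} is therefore equivalent to the statement that the reachable set
\[
R:=\{z(T;v): v\in L^2_+(\omega\times(0,T))\}
\]
is dense in $L^2_+(\omega)$. I will prove this with a Hahn--Banach argument on cones. $R$ is a convex cone, since $v\mapsto z(T;v)$ is linear and $L^2_+$ is a convex cone. By the weak maximum principle (Lemma \ref{l1.2}(i), applied in the linear case on $\omega$ with zero boundary data), $R\subset L^2_+(\omega)$. So it suffices to show that $\overline{R}\supset L^2_+(\omega)$. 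Suppose this fails, and let $g\in L^2_+(\omega)\setminus\overline{R}$. Separating the point $g$ from the closed convex cone $\overline R$ gives $\varphi_T\in L^2(\omega)$ with
\[
\int_\omega \varphi_T\, z(T;v)\,dx\le 0 \quad \forall v\in L^2_+ ,\qquad \int_\omega \varphi_T g\,dx>0 .
\]

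Next I introduce the adjoint problem
\[
-\varphi_t-(b\varphi_x)_x=0 \ \text{in } \omega\times(0,T),\qquad \varphi=0 \ \text{on } \partial\omega\times(0,T),\qquad \varphi(T)=\varphi_T .
\]
The duality identity $\int_\omega \varphi_T z(T)\,dx=\iint \varphi v\,dx\,dt$ turns the first inequality into $\iint \varphi v\le0$ for every $v\ge0$. Hence $\varphi\le 0$ a.e. in $\omega\times(0,T)$.

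The key step is to show that this forces $\varphi_T\le0$, which gives the contradiction $\int_\omega\varphi_T g\,dx\le0$. The idea is to approximate the identity at time $T$ by data that reach it from earlier times. Take any $h\in L^2_+(\omega)$ and let $w$ solve the forward equation $w_t-(bw_x)_x=0$ in $\omega\times(s,T)$ with $w(s)=h$ and zero Dirichlet data. Then $w\ge0$, and duality gives $\int_\omega\varphi_T w(T)\,dx=\int_\omega \varphi(s) h\,dx\le 0$. The delicate part, which I expect to be the main obstacle, is passing $s\to T^-$. For this I use the continuity $w(T)\to h$ in $L^2(\omega)$ as $s\to T$. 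This follows either from strong continuity of the evolution family generated by $(b(\cdot,t)\partial_x)_x$, where $b\in C^{1,1}$ keeps the nonautonomous theory standard, or from the energy estimate of Lemma \ref{t2.0} when $h\in H^1_0(\omega)$, with $T-s$ playing the role of $T$, combined with a density argument in $L^2_+(\omega)$. Letting $s\to T$ yields $\int_\omega \varphi_T h\,dx\le 0$ for every $h\ge 0$, that is, $\varphi_T\le 0$ a.e.

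An equivalent and perhaps cleaner route avoids the limiting argument. Because $\varphi\in C([0,T];L^2(\omega))$ and $\varphi\le0$ a.e. in the cylinder, we have $\varphi(t)\le 0$ for a.e. $t$. Continuity in $L^2$ then gives $\varphi(T)=\varphi_T\le0$, since $\{f\le 0\}$ is closed in $L^2$. Either way, $\int_\omega \varphi_T g\,dx\le 0$ because $g\ge0$, contradicting the separation. Therefore $\overline R=L^2_+(\omega)$, and $F$ is dense in $y(T;0)+L^2_+(\omega)$.
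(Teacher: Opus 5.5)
Your proposal is correct and follows the paper's proof essentially step by step: reduction to $y_0=0$, Hahn--Banach separation of a point from the closed convex cone, the backward adjoint problem with duality giving $\varphi\le 0$ a.e., and the contradiction from $\varphi_T\le 0$. Your second route, continuity of $\varphi$ in $C([0,T];L^2(\omega))$ plus closedness of the nonpositive cone, is exactly the justification the paper omits when it writes ``in particular, $g=q(T)\le 0$''; likewise, your explicit use of the cone structure explains the paper's unexplained one-sided inequality $\int_\omega y(T;v)g\,dx\le 0<\alpha$.
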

\emph{Proof of Lemma \ref{t1.3}.} Without loss of generality, we assume $  y_0 \equiv 0  $. Arguing by contradiction, suppose that there exists $  y_d \in L^2_+(\omega)  $ such that $ y_d \notin \overline{F} $. Note that $  \overline{F}  $ is a closed and convex set. By the Hahn-Banach theorem (in its geometrical form), we can separate $  y_d $ from $  \overline{F} $, i.e., there exist $   \alpha \in \mathbb{R}  $ and $  g \in L^{2}(\omega) $ such that
\begin{equation}\label{2.1}
\int_{\omega} y(T; v) g \, dx \leq 0 < \alpha < \int_{\omega} y_d g \, dx \quad \forall v \in L^2_+(\omega\times(0,T)).
\end{equation}
Let $  q \in C([0, T]; L^{2}(\omega))   $ be the solution of the auxiliary backward problem:
\begin{equation}\label{e2.2}
\left\{\begin{array}{ll}
-q_t - (bq_x)_x= 0 & \text{in } \omega\times(0,T), \\[2mm]
q = 0 & \text{on } \partial\omega\times(0,T), \\[2mm]
q(T) = g & \text{in } \omega.
\end{array}\right.
\end{equation}
Multiplying both sides of the first equation of (\ref{e2.2}) by $  y   $ and integrating by parts, we obtain
\begin{equation*}
0 \ge \int_{\omega} g y(T; v) \, dx= \int_{\omega\times(0,T)} q v \, dx dt \quad \forall v \in L^2_+(\omega\times(0,T)).
\end{equation*}
This implies that $  q \leq 0  $ a.e. in $\omega\times(0,T)$. In particular, $  g = q(T) \leq 0   $. This contradicts (\ref{2.1}), because (\ref{2.1}) implies that $  \int_{\omega} y_d g \, dx > 0  $ while $   y_d \geq 0  $ and $  g \leq 0  $ would force this integral to be non-positive. \endpf

\begin{remark}
By the classical convolution technique, we can deduce that there exists a sequence $\{v_k\}_{k=1}^\infty \subset C_+(\overline{\omega\times(0,T)})$ such that $v_k\rightarrow v$ in $L^2(\omega\times(0,T))$ as $k \rightarrow +\infty$. Let $y$ and $y_k$ denote the unique solutions of \eqref{2.2} corresponding to the controls $v$ and $v_k$, respectively. Subtracting the two equations, multiplying the resulting equation by the difference $y - {y_{k}}$, and integrating by parts yields
\begin{equation*}
\begin{array}{ll}
\displaystyle\quad\frac{1}{2} \int_\omega (y(x,T) - y_{k}(x,T))^2 dx + \int_{\omega\times(0,T)}  b(x,t) \left( (y(x,t) - y_k(x,t))_x \right)^2 dxdt  \\[3.5mm]
=\displaystyle \int_{\omega\times(0,T)}  (v(x,t) - v_k(x,t))(y(x,t) - y_k(x,t)) dxdt,
\end{array}
\end{equation*}
Using condition  (\ref{b}) and applying the Cauchy and Poincar\'{e} inequalities, we obtain
\begin{equation*}
\|y(\cdot,T) - y_{k}(\cdot,T) \|_{L^2(\omega)} \le C(\rho,meas(\omega) ) \| v_k - v \|_{L^2(\omega\times(0,T))}.
\end{equation*}
Consequently, the $L^2$-norm of the difference between the solutions at time $T$ can be made arbitrarily small by taking the $L^2$-norm of the difference of the controls sufficiently small.
\end{remark}

Now, we are in a position to give a proof of Theorem \ref{l1.3}.

\emph{Proof of Theorem \ref{l1.3}.}
First, let us fix $M$ as the smallest natural number satisfying $M \cdot l \geq 1$, where $l$ is the (fixed) length of the moving subinterval $\omega(t)$ (see (\ref{r})).
Given any element $y_d \in L_+^2(0, 1)$, we can decompose it into $M$ ``pieces'' localized in disjoint subintervals of length at most $l$, as follows:
\begin{equation*}
y_{d_1}(x) = \begin{cases} y_d(x), & x \in (0, l), \\ 0, & x \in [l, 1), \end{cases}
\end{equation*}
\begin{equation*}
y_{d_j}(x) = \begin{cases} y_d(x), & x \in ((j-1)l, jl), \\ 0, & \text{otherwise}, \end{cases}
\end{equation*}
for $ j = 2, \ldots, M-1 $,
\begin{equation*}
y_{d_M}(x) = \begin{cases} y_d(x), & x \in ((M-1)l, 1), \\ 0, & \text{otherwise}. \end{cases}
\end{equation*}
Clearly, each $ y_{d_j}\in L^2_+(0,1)$ and
\begin{equation*}
y_d (x) = \sum_{j=1}^M y_{d_j}(x), \quad \text{a.e. } x \in (0, 1).
\end{equation*}
Moreover, given $ \epsilon > 0 $, there exist $\hat{y}_{d_j} \in H_0^1(0,1) \cap L^2_+(0,1)$ for $ j = 1, \ldots, M  $, with its support $ \omega_j $ contained in the corresponding one of each $ y_{d_j} $ and verifying
\begin{equation*}
\sum_{j=1}^M \| \hat{y}_{d_j} - y_{d_j} \|_{L^2(0,1)} \leq \frac{\epsilon}{2}.
\end{equation*}

We now present a sequential argument, using the controls provided by Lemma \ref{t1.3}, as follows: given $  \hat{y}_{d_j} $, $  \delta_j \in (0, \delta_{j-1}/2)  $, $  j = 1, \ldots, M   $, there exist controls
\begin{equation*}
\hat{v}_j(x, t) = \begin{cases}
0, & t \in (0, T - 2\delta_j), \\
{v}_j(x, t), & t \in (T - 2\delta_j, T - \delta_j), \\
0, & t \in (T - \delta_j, T),
\end{cases}
\end{equation*}
for $  j = 1, \dots, M-1   $, and
\begin{equation*}
\hat{v}_M(x, t) = \begin{cases}
0, & t \in (0, T - \delta_M), \\
{v}_M(x, t), & t \in (T - \delta_M, T),
\end{cases}
\end{equation*}
for which
\begin{equation*}
\| \tilde{y}_{0,\hat{v}_j,\omega_j}(\cdot, T) - \hat{y}_{d_j} \|_{L^2(0,1)} \leq \frac{\epsilon}{2M}, \quad j = 1, \dots, M.
\end{equation*}
We finish the proof by taking
\begin{equation*}
v(x, t) = \sum_{j=1}^M \hat{v}_j(x, t), \qquad \tilde{y}(x, t) = \sum_{j=1}^M \tilde{y}_{0,\hat{v}_j,\omega_j}(x, t)
\end{equation*}
and noticing that by the linearity
\begin{equation*}
\begin{aligned}
\| \tilde{y}(\cdot, T) - y_d \|_{L^2(0,1)} &\leq \left\| \tilde{y}(\cdot, T) - \sum_{j=1}^M \hat{y}_{d_j} \right\|_{L^2(0,1)} + \left\| \sum_{j=1}^M \hat{y}_{d_j} - y_d \right\|_{L^2(0,1)} \\
&\leq \sum_{j=1}^M \| \tilde{y}_{0,\hat{v}_j,\omega_j}(\cdot, T) - \hat{y}_{d_j} \|_{L^2(0,1)} + \sum_{j=1}^M \| \hat{y}_{d_j} - y_{d_j} \|_{L^2(0,1)} \leq \epsilon.
\end{aligned}
\end{equation*}
Note that in each time subinterval $   (T - \delta_j, T - \delta_{j+1})   $, the control $v(x, t)  $ is equal to $  \hat{v}_{j+1}(x, t)   $ and hence its support is included in $   \omega(t) = (r(t), r(t) + l)   $, with $   r(t) \in PC[0, T]  $ given by
\begin{equation*}
r(t) = \begin{cases}
0, & t \in (0, T - \delta_1), \\
l, & t \in (T - \delta_1, T - \delta_2), \\
2l, & t \in (T - \delta_2, T - \delta_3), \\
\qquad\quad\vdots & \\
jl, & t \in (T - \delta_j, T - \delta_{j+1}), \\
\qquad\quad\vdots & \\
(M-2)l, & t \in (T - \delta_{M-2}, T - \delta_{M-1}), \\
1-l, & t \in (T - \delta_{M-1}, T).
\end{cases}
\end{equation*}
This implies that $v(x, t) = v(x, t)\chi_{\omega(t)}(x)   $ and $  \tilde{y} = \tilde{y}_{0,v,r}  $, as desired. \endpf

\section{Controllability of linear parabolic equations governed by multiplicative control}\label{sec3.3}

In this section, we establish the controllability of the following linear parabolic equation:
\begin{equation}\label{2.0}
\left\{\begin{array}{ll}
y_{t}-(by_{x})_x = u(x,t)y(x,t)\chi_{\omega(t)}(x), & \text{in } Q_T, \\[2mm]
y(0, t) = y(1, t) = 0, & \text{in } (0, T), \\[2mm]
y(x, 0) = y_0(x), & \text{in } (0, 1),
 \end{array}\right.
\end{equation}
where  $b\in C^{1,1}(\overline{Q_T})$ satisfies the uniform ellipticity condition
\begin{equation}\label{1.3}
b(x, t)\geq \rho > 0 \qquad \forall (x, t)\in \overline{Q_T},
\end{equation}
and $\chi_{\omega(t)}$ denotes the characteristic function of the time-dependent set $\omega(t)$ defined in \eqref{r}.

Combining Lemma \ref{l1.2}-(i), (iii) with Theorem \ref{l1.3}, and following the argument of Theorem 4.1 in \cite{FK}, we can prove the following approximate nonnegative controllability result for \eqref{2.0}.
\begin{lemma}\label{t2.2}
Let us assume that $y_0\in L^2_+(0,1)$, $y_0\not\equiv0$. Then, for any $T > 0$, the set
\begin{equation*}\label{1.4}
F^+_{2,y_0,mb}(T) =\left\{y_{y_0,u,r}(x,T) : x \in (0, 1), u \in L^\infty_+(Q_T)\ \text{and} \ r\in PC[0, T]\right\}
\end{equation*}
is dense in $\tilde{y}_{y_0}   (   \cdot, T )+L^2_+(0,1)$.
\end{lemma}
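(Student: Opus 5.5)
The plan follows the scheme of Theorem 4.1 in \cite{FK}. We turn the additive result, Theorem \ref{l1.3}, into a multiplicative one by writing the additive control as $v=uy$, where $y$ is the controlled state. Positivity of $y$ on the support of $v$ is supplied by Lemma \ref{l1.2}. Fix $y_d\in \tilde y_{y_0}(\cdot,T)+L^2_+(0,1)$ and $\epsilon>0$, and write $y_d=\tilde y_{y_0}(\cdot,T)+z_d$ with $z_d\ge0$. By linearity of \eqref{2.0+}, the solution with initial datum $y_0$ and control $v$ equals $\tilde y_{y_0}+z$, where $z$ solves \eqref{2.0+} with zero initial datum and control $v$.

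First, apply the argument of Theorem \ref{l1.3} (with initial datum $0$) to obtain $v\in L^2_+(Q_T)$ and $r\in PC[0,T]$ such that $\|z(\cdot,T)-z_d\|_{L^2}<\epsilon/2$. By the remark after Lemma \ref{t1.3}, we may take $v$ continuous, bounded, nonnegative, and supported in $\{(x,t):x\in\omega(t)\}$. Moreover, $v$ can be arranged to vanish near the boundaries $\partial\omega_j$ of the pieces and near the switching times.

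Second, let $y=\tilde y_{y_0}+z$. By Lemma \ref{l1.2}(i), $z\ge0$. By Lemma \ref{l1.2}(iii) (with $u=0$), $\tilde y_{y_0}(\cdot,t)>0$ in $(0,1)$ for every $t\in(0,T]$. Hence $y(\cdot,t)>0$ in $(0,1)$ for every $t>0$.

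Third, define $u=v/y$ on the support of $v$ and $u=0$ elsewhere. Then $u\ge0$ and $uy\chi_{\omega(t)}=v$, so $y$ solves \eqref{2.0} with this $u$. Because \eqref{2.0} with $u\in L^\infty$ is well posed, $y_{y_0,u,r}(\cdot,T)=y(\cdot,T)$, and therefore $\|y(\cdot,T)-y_d\|_{L^2}<\epsilon$.

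The main obstacle is proving $u\in L^\infty$. This requires a uniform positive lower bound for $y$ on $\mathrm{supp}\,v$, and it is where we choose the supports. The supports of the controls $\hat v_j$ in the proof of Theorem \ref{l1.3} lie in time intervals $[T-2\delta_j,T-\delta_j]$, which stay away from $t=0$. We also shrink them, via the convolution/cutoff approximation, to compact sets $K\subset(0,1)\times(0,T]$ bounded away from $x=0$ and $x=1$, at an arbitrarily small cost in $L^2$ at time $T$. On such a compact set, the strong maximum principle (Lemma \ref{l1.2}(iii)) together with continuity of $\tilde y_{y_0}$ for $t>0$, from interior parabolic regularity, gives $\min_K\tilde y_{y_0}=m>0$. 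Then $y\ge \tilde y_{y_0}\ge m$ on $K$, so $0\le u\le \|v\|_\infty/m$.

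The approximation step therefore has to be done carefully. Cutting $v$ off near $x=0$, $x=1$ and near $t=0$ changes $z(\cdot,T)$ only slightly, by the $L^2$-stability estimate in the remark after Lemma \ref{t1.3}, applied on all of $(0,1)$. One must also check that the cutoff keeps $\mathrm{supp}\,v(\cdot,t)\subset\omega(t)$, so that the same $r\in PC[0,T]$ still works.
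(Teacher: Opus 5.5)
Your proposal is correct and is essentially the argument the paper relies on. The paper gives no separate proof; it defers to Theorem 4.1 of \cite{FK} combined with Lemma \ref{l1.2}(i),(iii) and Theorem \ref{l1.3}, which is exactly your substitution $v=uy$ with $y=\tilde y_{y_0}+z$, $z\ge 0$, using a positive lower bound for $\tilde y_{y_0}$ on a compact support of $v$ (away from $x=0,1$ and $t=0$) to get $u=v/y\in L^\infty_+(Q_T)$. Your cutoff of $v$ via the $L^2$ stability estimate on all of $(0,1)$, and your use of the zero-initial-datum version of Theorem \ref{l1.3} (which is what its proof actually constructs), supply details the paper leaves implicit.
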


We now prove the main global controllability result.
\begin{theorem}\label{t2.1}
Let us assume that $y_0\in L^2_+(0,1)$, $y_0 \not\equiv 0$. Then, for any $T > 0$, the set
\begin{equation*}
F_{2,y_0,mb}(T) =\left\{y_{y_0,u,r}(\cdot, T)  : u \in L^\infty(Q_T),\ r \in PC[0, T] \right\}
\end{equation*}
is dense in $L^2_+(0,1)$.
\end{theorem}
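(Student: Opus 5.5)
Following the strategy of Theorem 4.2 in \cite{FK}, we split the time interval $(0,T)$ into two pieces $(0,T-\tau)$ and $(T-\tau,T)$. On the first piece we use a nonpositive multiplicative control to shrink the state. On the second piece we use Lemma \ref{t2.2} with nonnegative multiplicative controls to build up the target. Fix $y_d\in L^2_+(0,1)$ and $\epsilon>0$. By density we may assume $y_d\in H^1_0(0,1)\cap L^2_+(0,1)$ and $y_d\not\equiv0$; otherwise approximate $y_d$ within $\epsilon/3$ by such a function.

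\textbf{Step 1 (damping phase).} Move the support over the whole interval, using $r\in PC[0,T-\tau]$ that visits the positions $0,l,\dots,1-l$. On each support take $u=-K\le 0$ with $K$ large. Lemma \ref{l1.2}(iii) keeps the state positive, since $u\le0$ with piecewise-in-time constant $u(x)$ is applied repeatedly on subintervals. An energy estimate (multiply by $y$) gives $\|y(\cdot,t)\|_{L^2}$ decaying at rate $e^{-Kt}$ while the support sits on $\omega$ outside of which we simply have heat decay. The goal is an intermediate state $z_0:=y(\cdot,T-\tau)>0$ with $\|z_0\|_{L^2}$ as small as we like.

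\textbf{Step 2 (short final phase).} On $(T-\tau,T)$ let $\tilde y_{z_0}$ be the uncontrolled solution. Its norm satisfies $\|\tilde y_{z_0}(\cdot,T)\|_{L^2}\le\|z_0\|_{L^2}$, which is small. By the damping of Step 1 we can arrange $\|\tilde y_{z_0}(\cdot,T)\|_{L^2}<\epsilon/3$, and we want $\tilde y_{z_0}(\cdot,T)\le y_d$ approximately in the sense needed for the next step. Lemma \ref{t2.2}, applied with initial time $T-\tau$ and datum $z_0\not\equiv0$, gives $u\in L^\infty_+$ and $r\in PC$ such that the final state is within $\epsilon/3$ of $\tilde y_{z_0}(\cdot,T)+w$ for any $w\in L^2_+$. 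Choose $w=(y_d-\tilde y_{z_0}(\cdot,T))^+$. Then
\[
\|\tilde y_{z_0}(\cdot,T)+w-y_d\|\le\|(\tilde y_{z_0}(\cdot,T)-y_d)^+\|\le\|\tilde y_{z_0}(\cdot,T)\|<\epsilon/3.
\]
Concatenating the two controls and supports gives $u\in L^\infty(Q_T)$ and $r\in PC[0,T]$, with final state within $\epsilon$ of $y_d$.

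\textbf{Main obstacle.} The main obstacle is Step 1: making $\|z_0\|$ small using only a \emph{moving} support of length $l$, while keeping $z_0\ge0$ and $z_0\not\equiv0$. The energy identity gives $\frac{d}{dt}\|y\|^2\le-2K\int_{\omega(t)}y^2$, and mass outside $\omega(t)$ may not decay fast. I would first try to avoid this entirely. By the maximum principle, any $u\le0$ keeps $0\le y\le\tilde y_{y_0}$. Also, $\tilde y_{y_0}$ itself need not be small; what is actually needed in Step 2 is only $\|(\tilde y_{z_0}(\cdot,T)-y_d)^+\|$ small. So I would cycle the support through the $M$ positions repeatedly with $u=-K$, and use the parabolic smoothing and continuity of the dynamics to show that the total mass becomes as small as desired for $K$ large. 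If this fails, the fallback is to take $\tau$ small and use Lemma \ref{t2.0}-type closeness.
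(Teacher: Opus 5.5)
Your architecture matches the paper's. You damp the state with nonpositive multiplicative controls on successive windows, then apply Lemma~\ref{t2.2} on the remaining interval and conclude by the triangle inequality. Step~2 is fine: your choice $w=(y_d-\tilde y_{z_0}(\cdot,T))^+$ works, while the paper simply takes $w=y_d$ and pays $\|\tilde y_{z_0}(\cdot,T)\|_{L^2}\le\|z_0\|_{L^2}$. The gap is Step~1, which is the whole substance of the theorem (the paper's Proposition~\ref{l2.3}), and you do not prove it.

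The estimate you quote in Step~1 is false. Multiplying by $y$ gives only $\frac{d}{dt}\|y\|_{L^2}^2\le-2K\int_{\omega(t)}y^2\,dx$, so only the mass currently inside the window is damped. Heat decay outside $\omega(t)$ yields a fixed factor over a fixed time, not arbitrary smallness. Your proposed repair is to cycle the window with $u=-K$, $K$ large, and appeal to ``parabolic smoothing and continuity.'' This supplies no mechanism for the actual difficulty. While the window sits on $(jl,(j+1)l)$, mass diffuses from it and from the undamped region back into the already damped set $(0,jl)$, and nothing in your sketch bounds this refilling. The fallback of taking $\tau$ small targets the wrong interval: shortening the final phase does nothing to make $z_0$ small.

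The missing idea is to make each new damping phase \emph{short}, so the previously damped region cannot be refilled appreciably. At the same time, the new multiplier must be large enough to damp the new window within that short time. The paper does this inductively with $u=-m_j\chi_{((j-1)l,jl)}$ on $(T_{j-1},T_j)$:
\begin{itemize}
\item The energy identity gives $\int_{T_{j-1}}^{T}\int_{(j-1)l}^{jl}y^2\,dx\,dt\le\|y(\cdot,T_{j-1})\|_{L^2}^2/(2m)$. So a large $m_j$ yields some $T_j$ with small mass in the new window.
\item The refilling of $(0,(j-1)l)$ is the boundary term $2\int_{T_{j-1}}^{T_j}b\,y_x\,y\,dt$ at $x=(j-1)l$. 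Using Lemma~\ref{p1.2} (an $L^2$ bound on $y_t$ and a Bernstein bound on $y_x(0,t)$), this is bounded by a constant times $T_j-T_{j-1}$, and it is made small by choosing $T_j-T_{j-1}$ small.
\item Induction then gives $\int_0^{jl}y_j(x,T_j)^2\,dx\le(2j-1)\epsilon^2/(4(2M-1))$.
\end{itemize}

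Your own two ingredients would also close the gap, if you apply them to the intermediate phases rather than the final one:
\begin{itemize}
\item Since $u\le0$, comparison gives $0\le y\le\tilde y$ during the new phase, where $\tilde y$ is the uncontrolled solution issued from $y(\cdot,T_{j-1})$.
\item By continuity of $\tilde y$ in $L^2$ (cf.\ Lemma~\ref{t2.0}), $\sup_{t\in[T_{j-1},T_{j-1}+\delta]}\|\tilde y(\cdot,t)-y(\cdot,T_{j-1})\|_{L^2}$ is small for small $\delta$, \emph{uniformly in $m_j$}. This uniformity is what makes the order of choices legitimate.
\item Fix $\delta$ first. Only afterwards choose $m_j$ so large that the new window is damped at some $T_j\in(T_{j-1},T_{j-1}+\delta)$.
\end{itemize}
As written, however, Step~1 is not established.
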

\emph{Proof of Theorem \ref{t2.1}.}
Our objective is to approximate any target function $y_d \in L^2_+(0,1)$, even when $y_d \not\in\tilde{y}_{y_0}(\cdot, T) + L^2_+(0,1)$.
To this end, we fix $M$ (as defined previously) as the smallest natural number such that $M \cdot l \geq 1$, where $l$ is the length of the moving subinterval $\omega(t)$.
With this $M$, we consider intermediate times $0 \leq T_1 \leq T_2 \leq \cdots \leq T_M < T$. Let $y_1(x, t)$ denote the unique solution of the following system
\begin{equation*}
\left\{\begin{array}{ll}
y_{t}-(by_{x})_x=-m_1y(x,t)\chi_{(0,l)}(x),  &\text{in}\ (0,1)\times(0,T_1),\\[2mm]
y(0,t)=0, \ y(1,t)=0, &\text{in}\ (0,T_1),\\[2mm]
y(x,0)=y_{0}(x),&\text{in}\ (0,1),
\end{array}\right.
\end{equation*}
where $m_1 \geq 0$ will be chosen appropriately later. Assuming now that $y_{j-1}$ is already known, $y_j$ will denote the solution of the system
\begin{equation*}
\left\{\begin{array}{ll}
y_{t}-(by_{x})_x=-m_jy(x,t)\chi_{((j-1)l,jl)}(x),  &\text{in}\ (0,1)\times(T_{j-1},T_j),\\[2mm]
y(0,t)=0, \ y(1,t)=0, &\text{in}\ (T_{j-1},T_j),\\[2mm]
y(x,T_{j-1})=y_{j-1}(x,T_{j-1}),&\text{in}\ (0,1),
\end{array}\right.
\end{equation*}
for $j = 2, \dots, M-1$, with constants $m_j \geq 0$ also to be chosen. Finally, $y_M$ will denote the solution of the problem
\begin{equation*}
\left\{\begin{array}{ll}
y_{t}-(by_{x})_x=-m_My(x,t)\chi_{((M-1)l,1)}(x),  &\text{in}\ (0,1)\times(T_{M-1},T_M),\\[2mm]
y(0,t)=0, \ y(1,t)=0, &\text{in}\ (T_{M-1},T_M),\\[2mm]
y(x,T_{M-1})=y_{M-1}(x,T_{M-1}),&\text{in}\ (0,1),
\end{array}\right.
\end{equation*}
with $m_M \geq 0$ to be chosen. The following result will be crucial in this section:
\begin{proposition}\label{l2.3}
With the previous notations, given $\epsilon > 0$, there exist nonnegative constants $m_1, \dots, m_M$ and intermediate times $0 \leq T_1 \leq T_2 \leq \cdots \leq T_M < T$ such that
\begin{equation*}
\| y_M(\cdot, T_M) \|_{L^2(0,1)} \leq \frac{\epsilon}{2}.
\end{equation*}
\end{proposition}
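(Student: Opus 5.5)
Since the proposition asks only for a small $L^2$ norm at $T_M$, without positivity or closeness to a target, the plan is to kill the solution piece by piece. In each cell $((j-1)l,jl)$, in turn, we apply a large damping $m_j$ for a short time. Nothing beyond the energy method and the linear parabolic theory behind Lemma \ref{l1.2} and Lemma \ref{t2.0} is needed. Write $I_j$ for the $j$-th cell, with $I_M=((M-1)l,1)$, so that $\bigcup_j \overline{I_j}=[0,1]$. Rather than choosing the cells' times sequentially against a fixed $\epsilon$, we choose the parameters backwards: first the durations $\tau_j=T_j-T_{j-1}$ (all small), then the $m_j$ (large). We may also take $T_0=0$ and $T_M$ as small as we like, so the constraint $T_M<T$ is harmless.

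The basic one-step estimate comes from multiplying the $j$-th equation by $y$ and using \eqref{1.3}:
\[
\frac{1}{2}\frac{d}{dt}\|y\|^2_{L^2(0,1)}+\rho\|y_x\|^2_{L^2(0,1)}+m_j\|y\|^2_{L^2(I_j)}\le 0 .
\]
This gives monotonicity, $\|y_j(\cdot,T_j)\|\le \|y_{j-1}(\cdot,T_{j-1})\|\le \|y_0\|$, and the bound $\int_{T_{j-1}}^{T_j}\|y\|_{L^2(I_j)}^2dt\le \|y_0\|^2/(2m_j)$. Since $m_j$ only damps, the mass outside the active cell cannot grow.

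The key step is to show that, at the end of step $j$, the mass on $I_j$ is small, and that it stays small during the later, short steps. I would argue by linearity and compactness rather than pointwise. For fixed small $\tau_j$, as $m_j\to\infty$ the solution of step $j$ converges in $L^2(0,1)$ at time $T_j$ to the solution of the problem with an additional homogeneous Dirichlet condition imposed on $I_j$. This is a standard penalization limit: from the energy bound, $y$ is bounded in $L^2(H^1_0)$ and $\|y\|_{L^2(I_j\times(T_{j-1},T_j))}\to0$, and by Aubin–Lions the limit vanishes on $I_j$. So the limiting state $z_j$ at $T_j$ vanishes on $I_j$ and has norm at most $\|y_{j-1}(T_{j-1})\|$.

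To control how much mass re-enters the cleaned cells during the later steps, I would first take all $\tau_k$ tiny and then the $m_k$ large, in the order $j=1,\dots,M$. The re-entry is estimated with a Lemma \ref{t2.0}-type bound: over a short time $\tau$, the solution moves in $L^2$ by at most $C\sqrt{\tau}$ times an $H^1$ norm. In the penalized problem this $H^1$ norm is not uniform in $m$, so I would instead smooth the initial data $y_0$ into $H_0^1$ first, using the monotone $L^2$ contraction, and use the estimate $\frac{d}{dt}\|y_x\|^2\lesssim\|y_x\|^2$ plus the penalty term, which needs $b\in C^{1,1}$. Alternatively, one can use parabolic smoothing on the final-step limit problems, which are independent of $m$.

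The limiting procedure then yields states $z_1,\dots,z_M$ with $z_M=0$ on $I_M$, and $z_M$ within $\epsilon/4$ of zero on the earlier cells by the small-$\tau$ continuity. Choosing each $m_j$ large enough that the penalized solution is within $\epsilon/(4M)$ of its limit, and propagating errors with the $L^2$ contraction, gives $\|y_M(\cdot,T_M)\|\le\epsilon/2$.

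The main obstacle is the re-entry step. It requires uniform-in-$m$ control of how much $L^2$ mass diffuses back into previously cleaned cells during the subsequent short intervals, and the order in which $\tau_k$ and $m_k$ are chosen must be consistent with that control. I expect to have to order the limits carefully: take the limits $m_M\to\infty$, then $m_{M-1}\to\infty$, and so on, inductively, with the $\tau$'s fixed beforehand and small relative to an $H^1$ bound on the smoothed data.
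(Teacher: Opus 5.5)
Your route differs from the paper's. The paper picks $T_j,m_j$ cell by cell. It controls mass flowing back into the cleaned region $(0,(j-1)l)$ by bounding the interface term $2\int b\,y_x y\,dt$ at $x=(j-1)l$, using Lemma \ref{p1.2} (the $\|y_t\|$ bound and the Bernstein bound on $y_x(0,t)$), the maximum principle, and $T_j-T_{j-1}$ small. You instead pass to the singular limit $m_j\to\infty$ and propagate errors by $L^2$ contraction. That skeleton is sound, but the step you flag yourself is left open, and the mechanism you propose for it does not work.

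\begin{itemize}
\item Gronwall for $E(t)=\int_0^1 b y_x^2\,dx+m_k\int_{I_k}y^2\,dx$ (which satisfies $E'\le KE$, $K=|b_t|_{C(\overline{Q_T})}/\rho$) starts from $E(T_{k-1})$. That quantity contains $m_k\int_{I_k}y(\cdot,T_{k-1})^2\,dx$, and smoothing $y_0$ does nothing to this term.
\item The $m$-uniform bound the energy method does give, by averaging $E$ over the step with $\int_{T_{k-1}}^{T_k}E\,dt\le\frac12\|y(\cdot,T_{k-1})\|^2_{L^2}$, is $\|y_x(\cdot,T_k)\|^2_{L^2}\le C\|y(\cdot,T_{k-1})\|^2_{L^2}/\tau_k$. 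This degenerates as $\tau_k\to0$. The resulting Lemma \ref{t2.0}-type re-entry bound, of size $\sqrt{(T_M-T_j)/\tau_j}$, needs the later durations small relative to $\tau_j$, not merely all durations small.
\item Lemma \ref{t2.0} cannot be applied across a damped step as you state it. During step $k$ the solution moves by $O(1)$ in $L^2$, because the mass on $I_k$ is removed (in the limit problem it even jumps at $t=T_{k-1}$). You would have to localize, or dominate by the undamped flow via comparison.
\end{itemize}

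In fact no re-entry estimate is needed in your framework. The limit of step $k$ is the heat flow in $V_k=\{v\in H^1_0(0,1): v=0 \text{ on } I_k\}=H^1_0(0,(k-1)l)\oplus H^1_0(kl,1)$, started from $z_{k-1}\chi_{(0,1)\setminus I_k}$. The Dirichlet condition at $x=(k-1)l$ isolates $(0,(k-1)l)$, where $z_{k-1}$ vanishes by induction. Hence $z_k=0$ on $(0,kl)$, and $z_M\equiv 0$ exactly. Writing $P_k^{m}$ and $P_k^{\infty}$ for the step maps, you get $\|y(\cdot,T_k)-z_k\|\le\|y(\cdot,T_{k-1})-z_{k-1}\|+\|(P_k^{m_k}-P_k^{\infty})z_{k-1}\|$. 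Since $z_{k-1}$ does not depend on any $m_j$, each $m_k$ can be chosen separately, for arbitrary durations $\tau_k>0$ with $\sum_k\tau_k<T$; the order of limits you worried about is irrelevant.

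What you must still justify is convergence at the fixed time $T_k$. An $L^2(H^1_0)$ bound plus Aubin--Lions gives at most space-time $L^2$ convergence, which says nothing at $t=T_k$. Instead, use $E'\le KE$ together with $\int E\,dt\le\frac12\|z_{k-1}\|^2_{L^2}$. This bounds $E$ and $\int\|y_t\|^2_{L^2}\,dt$ uniformly in $m$ on $[T_{k-1}+\delta,T_k]$, giving compactness in $C([T_{k-1}+\delta,T_k];L^2(0,1))$; test functions in $V_k$ then identify the limit. As a bonus you get $\|y(\cdot,T_k)\|^2_{L^2(I_k)}\le e^{K\tau_k}\|y(\cdot,T_{k-1})\|^2_{L^2}/(2m_k\tau_k)$.

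Completed this way, your argument works for every $y_0\in L^2(0,1)$, without the smoothness and compact-support reduction the paper makes. It also decouples the durations from the $m_k$. In the paper, by contrast, the admissible $T_2-T_1$ in \eqref{2.2-} depends on $m_2$ through $K_2=|u|^2_{C(\overline{Q_T})}\|y_{01}\|^2_{L^2(0,1)}$ in $C_1$.
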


We defer the proof of Proposition \ref{l2.3} until later in this section. Now, we proceed with the proof of Theorem \ref{t2.1}.  Let $\hat{y}_1(x, t)$ denote the unique solution of the system
\begin{equation*}
\left\{\begin{array}{ll}
y_{t}-(by_{x})_x=0,  &\text{in}\ (0,1)\times(T_{M},T),\\[2mm]
y(0,t)=0, \ y(1,t)=0, &\text{in}\ (T_{M},T),\\[2mm]
y(x,T_{M})=y_{M}(x,T_{M}),&\text{in}\ (0,1),
\end{array}\right.
\end{equation*}
It is known from \cite{Liu} that
\begin{equation}\label{M(t)}
\| \hat{y}_1(\cdot, T) \|_{L^2(0,1)} \leq \| y_M(\cdot, T_M) \|_{L^2(0,1)} \leq \frac{\epsilon}{2},
\end{equation}
where the last inequality comes from Proposition \ref{l2.3}.

By Lemma \ref{l1.2}-(i), we have that $y_j(x, t) \geq 0$ a.e. $(x, t) \in Q_T$ for $j = 1, \dots, M$. In particular, $y_M(x, T_M) \geq 0$ a.e. $x \in (0,1)$.
Assume that $y_d \in L^2_+(0,1)$ and $\epsilon> 0$ are fixed. Applying now Lemma \ref{t2.2} to system
(\ref{2.0}) in the domain $(0, 1)\times(T_M, T )$ with initial datum $y_M(\cdot, T_M )$, we deduce the existence of $u_2 \in L^\infty_+ ((0, 1)\times(T_M, T))$ and $r_2 \in PC[T_M, T]$ such that the solution $\hat{y}_2(x, t)$ of the system
\begin{equation*}
\left\{\begin{array}{ll}
{y}_{t}-(b{y}_{x})_x=u_2(x,t)y(x,t)\chi_{\omega(t)}(x),  &\text{in}\ (0, 1)\times(T_M,T),\\[2mm]
{y}(0,t)=0, \ {y}(1,t)=0, &\text{in}\ (T_M,T),\\[2mm]
{y}(x,T_M)=y_M(x,T_M),&\text{in}\ (0, 1),
\end{array}\right.
\end{equation*}
satisfies
\begin{equation}\label{3.5-}
\|\hat{y}_2(\cdot,T)-(y_d+\hat{y}_1(\cdot,T))\|_{L^{2}(0, 1)}<\frac{\epsilon}{2}.
\end{equation}
We finish the proof by defining
\begin{equation*}
y_{y_0,u,r}(x,t)=\left\{\begin{array}{ll}
\hat{y}_1(x,t),  &\text{in}\ (0, 1)\times(0,T_M],\\[2mm]
\hat{y}_2(x,t),  &\text{in}\ (0, 1)\times[T_M,T),
\end{array}\right.
\end{equation*}
with
\begin{equation*}
u(x,t)=\left\{\begin{array}{ll}
-m_1,  &\text{in}\ (0, 1)\times(0,T_1),\\[2mm]
-m_2,  &\text{in}\ (0, 1)\times(T_{1},T_2),\\[2mm]
\vdots & \\[2mm]
-m_j,  &\text{in}\ (0, 1)\times(T_{j-1},T_j),\\[2mm]
\vdots & \\[2mm]
-m_M,  &\text{in}\ (0, 1)\times(T_{M-1},T_M),\\[2mm]
u_2(x,t),  &\text{in}\ (0, 1)\times(T_M,T),
\end{array}\right.
\end{equation*}
and
\begin{equation*}
r(t)=\left\{\begin{array}{ll}
0,  &\text{in}\ (0,T_1),\\[2mm]
l,  &\text{in}\ (T_{1},T_2),\\[2mm]
\vdots & \\[2mm]
jl,  &\text{in}\ (T_{j},T_{j+1}),\\[2mm]
\vdots & \\[2mm]
(M-2)l,  &\text{in}\ (T_{M-2},T_{M-1}),\\[2mm]
1-l,  &\text{in}\ (T_{M-1},T_M),\\[2mm]
r_2(t),  &\text{in}\ (T_M,T).
\end{array}\right.
\end{equation*}
Clearly, $u\in L^\infty(Q_T )$ and $r\in PC[0, T]$. Moreover, thanks to (\ref{M(t)}) and (\ref{3.5-}), it is satisfied
\begin{equation*}
\begin{array}{ll}
\|y_{y_0,u,r}(\cdot,T)-y_d\|_{L^{2}(0,1)}\!\!\!&=\|y_{y_0,u,r}(\cdot,T)-(y_d+\hat{y}_1(\cdot,T))+\hat{y}_1(\cdot,T)\|_{L^{2}(0,1)}\\[2mm]
&\leq\|y_{y_0,u,r}(\cdot,T)-(y_d+\hat{y}_1(\cdot,T))\|_{L^{2}(0,1)}+\|\hat{y}_1(\cdot,T)\|_{L^{2}(0,1)}\\[2mm]
&=\|\hat{y}_2(\cdot,T)-(y_d+\hat{y}_1(\cdot,T))\|_{L^{2}(0,1)}+\|\hat{y}_1(\cdot,T)\|_{L^{2}(0,1)}<\epsilon,
\end{array}
\end{equation*}
as we were looking for. \endpf

As a preliminary step for the proof of Proposition \ref{l2.3}, we first establish the following result.
\begin{lemma}\label{p1.2}
Let $\sigma\in(0, 1)$. Assume that $u \in C^1(\overline{Q_T})$ with $u(x,t) \leq0$ for all $(x,t) \in \overline{Q_T}$ and that $y_0 \in C^{2+\sigma}[0, 1]$  satisfies $y_0(x) \geq 0$ for all $x \in[0, 1]$ and the first order compatibility condition. Consider the problem
\begin{equation}\label{1.2}
\left\{\begin{array}{ll}
y_{t}-(by_{x})_x = u(x,t)y(x, t), & \text{in } Q_T, \\[2mm]
y(0, t) = y(1, t) = 0, & \text{in } (0, T), \\[2mm]
y(x, 0) = y_0(x), & \text{in } (0, 1),
 \end{array}\right.
\end{equation}
where $b\in C^{1,1}(\overline{Q_T})$ satisfies
\eqref{1.3}. Then, the classical solution to \eqref{1.2} possesses the following properties:
\begin{enumerate}
\item[(a)] $\| y_t(\cdot, t) \|_{L^2(0, 1)}  \leq \left[2\left(\| (b(\cdot, 0)y'_{0})' \|^2_{L^2(0, 1)}+K_2 \right)+K_4K_3Te^{K_1T}\right]^\frac{1}{2}\ \forall t \in [0, T]$, where $ K_1$, $K_2$, $K_3$ and $K_4$ are positive constants depending only on $ \|y_0\|_{L^2(0, 1)}  $,  $\|\sqrt{b(\cdot, 0)}y'_0 \|_{L^2(0, 1)}$, $|b_t|_{C(\overline{Q_T})}$, $|u|_{C(\overline{Q_T})}$, $ |u_t|_{C(\overline{Q_T})}$ and $\rho$;
\item[(b)] $0 \leq y_x(0, t) \leq e^{\Big(1+\frac{|b_x|_{ C(\overline{Q_T})}}{\rho}\Big)} \cdot \max\limits_{x \in [0, 1]} \left\{ y_0'(x) e^{y_0(x)} \right\} \quad \forall t \in [0, T]$.
\end{enumerate}
\end{lemma}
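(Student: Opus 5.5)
For part (a) I plan to differentiate \eqref{1.2} in time and run an energy estimate for $z:=y_t$, after two preliminary bounds on $y$. First, multiplying \eqref{1.2} by $y$ and using $u\le 0$ together with \eqref{1.3} bounds $\sup_t\|y(\cdot,t)\|_{L^2(0,1)}$ and $\int_{Q_T} y_x^2$ by $\|y_0\|_{L^2(0,1)}$ and $\rho$. Second, multiplying by $y_t$ gives
\begin{equation*}
\int_0^1 y_t^2\,dx+\frac12\frac{d}{dt}\int_0^1 b y_x^2\,dx=\frac12\int_0^1 b_t y_x^2\,dx+\int_0^1 u y y_t\,dx .
\end{equation*}
Young's inequality and Gronwall's lemma then give $\sup_t\int_0^1 y_x^2\,dx\le K_3 e^{K_1T}$, with constants depending only on $\|\sqrt{b(\cdot,0)}y_0'\|_{L^2(0,1)}$, $\|y_0\|_{L^2(0,1)}$, $|b_t|_{C(\overline{Q_T})}$, $|u|_{C(\overline{Q_T})}$ and $\rho$.

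Next, $z$ satisfies
\begin{equation*}
z_t-(bz_x)_x=(b_t y_x)_x+u_t y+u z,\qquad z(0,t)=z(1,t)=0,
\end{equation*}
with initial value $z(\cdot,0)=(b(\cdot,0)y_0')'+u(\cdot,0)y_0$. The regularity needed to differentiate in $t$ follows from the Schauder theory in \cite{LSU}, using $u\in C^1$, $b\in C^{1,1}$, $y_0\in C^{2+\sigma}$ and the compatibility condition. Testing this equation with $z$, the term $u z^2$ is nonpositive. I will write $-\int_0^1 b_t y_x z_x\,dx\le \frac{\rho}{2}\int_0^1 z_x^2\,dx+\frac{|b_t|^2_{C(\overline{Q_T})}}{2\rho}\int_0^1 y_x^2\,dx$ and absorb the first piece into $\int_0^1 b z_x^2\,dx$, and I will bound $\int_0^1 u_t y z\,dx$ by Cauchy's inequality. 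Gronwall's lemma then yields
\begin{equation*}
\|z(\cdot,t)\|^2_{L^2(0,1)}\le \|z(\cdot,0)\|^2_{L^2(0,1)}+K_4K_3Te^{K_1T}.
\end{equation*}
Finally $\|z(\cdot,0)\|^2_{L^2(0,1)}\le 2\bigl(\|(b(\cdot,0)y_0')'\|^2_{L^2(0,1)}+K_2\bigr)$ with $K_2=|u|^2_{C(\overline{Q_T})}\|y_0\|^2_{L^2(0,1)}$, which is exactly the form claimed in (a).

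For part (b), the lower bound is immediate: Lemma \ref{l1.2}(i) gives $y\ge 0$, and since $y(0,t)=0$ this forces $y_x(0,t)\ge 0$. For the upper bound I will use a Bernstein-type maximum principle argument on the weighted gradient $q:=y_x e^{y+\kappa x}$ with $\kappa=|b_x|_{C(\overline{Q_T})}/\rho$. Differentiating \eqref{1.2} in $x$ gives a linear parabolic equation for $p=y_x$, and from it one computes $q_t-bq_{xx}-\beta q_x=\gamma q+(\text{terms})$. The weight $e^{y}$ is designed to produce a favorable $-b y_x^2$-type term, and the weight $e^{\kappa x}$ is designed to dominate the $b_x$ contributions. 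The aim is to show that a positive interior maximum of $q$ cannot grow, so that $\max q$ is controlled by its values on the parabolic boundary. At $t=0$ this value is at most $e^{\kappa}\max\{y_0'e^{y_0}\}$.

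At $x=0$ the equation itself helps: there $y_t=0$ and $uy=0$, so $by_{xx}+b_xy_x=0$. Hence
\begin{equation*}
q_x(0,t)=e^{\kappa\cdot 0+y}\,y_x\bigl(-b_x/b+y_x+\kappa\bigr)\ge 0,
\end{equation*}
so $q(0,t)\le q(x,t)$ for small $x>0$. Combining this with the interior bound and undoing the weight, using $0\le e^{\kappa x}\le e^{\kappa}$ and $y\ge 0$, gives $y_x(0,t)=q(0,t)\le e^{1+\kappa}\max\{y_0'e^{y_0}\}$. The extra factor $e$ absorbs the slack coming from the zero-order terms.

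The main obstacle is the interior computation in (b). I have to check that the zeroth-order coefficient produced by $u$ (which satisfies $u\le0$), by $u_x y$, and by $b_x,b_{xx}$ has the right sign, or can be absorbed by the choice of weight. The term $u_x y$ is the delicate one. If it cannot be absorbed, I would switch to a time-dependent weight $e^{-Ct}$, or compare $y$ directly with the barrier $\bar w(x)=A(1-e^{-\kappa x})$ near $x=0$ via Lemma \ref{l1.1}. The side $x=1$ also needs care: there $y_x\le 0$, so negative values of $q$ must be handled, but only the maximum of $q$ matters for the stated bound.
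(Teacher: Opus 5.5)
\textbf{Part (a) is fine; part (b) has a genuine gap.} Your part (a) is the paper's argument: energy identities for $y$, then for $z=y_t$. One adjustment is needed to land on the stated constant. Do not Gronwall the $z$-inequality. Instead, as the paper does, bound $\int_0^1|u_tyz|\,dx\le|u_t|_{C(\overline{Q_T})}\|y_x\|_{L^2}\|z_x\|_{L^2}$ via Poincar\'e and absorb it into $\rho\int_0^1 z_x^2\,dx$. This gives $\frac{d}{dt}\|z\|^2_{L^2}\le K_4\|y_x\|^2_{L^2}$, and you simply integrate. A plain Cauchy bound leaves $\|z\|^2$ on the right and puts an extra $e^{T}$ in front of $\|z(\cdot,0)\|^2$.

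The gap in (b) is that the Bernstein route for $q=y_xe^{y+\kappa x}$ fails already at leading order, not only at the $u_xy$ term. Put $p=y_x$. At any point where $q_x=0$, i.e.\ $p_x=-p(p+\kappa)$, a direct (formal) computation from the $x$-differentiated equation gives
\begin{equation*}
e^{-y-\kappa x}\,(q_t-bq_{xx})=p\,\bigl[b(p+\kappa)^2-2b_x(p+\kappa)+b_xp+b_{xx}+u(1+y)\bigr]+u_xy .
\end{equation*}
Its leading term $bp^3$ is positive. Since there is no a priori bound on $p$ (that is what you are proving), a positive interior maximum of $q$ cannot be ruled out. The weight $e^{y}$ yields a favourable $-by_x^2$ for $e^{y}$ itself, not for $y_xe^{y}$. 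In addition, $b_{xx}p$ and the sign-indefinite $u_xy$ could never be absorbed into the stated constant, which depends neither on $u$, nor on $b_{xx}$, nor on $T$. Finally, the factor $e$ in the bound does not come from ``absorbing slack''.

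\textbf{The missing idea is the barrier comparison you list only as a fallback; this is exactly what the paper does.} Take $\kappa=|b_x|_{C(\overline{Q_T})}/\rho$ and $M=\frac{e^{\kappa}}{1+\kappa}\max_{[0,1]}\{y_0'e^{y_0}\}$, and set $w=e^{y}+Me^{1-(1+\kappa)x}-1$. Since $u\le 0$ and $y\ge 0$,
\begin{equation*}
w_t-(bw_x)_x=e^{y}\bigl(uy-by_x^2\bigr)-M(1+\kappa)e^{1-(1+\kappa)x}\bigl[(1+\kappa)b-b_x\bigr]<0,
\end{equation*}
because $(1+\kappa)b-b_x\ge b>0$. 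Hence $\max w$ is attained on the parabolic boundary. The choice of $M$ makes $w(\cdot,0)$ nonincreasing, and $w(1,t)=Me^{-\kappa}\le Me=w(0,t)$. So $\max_{\overline{Q_T}}w=w(0,t)$ for every $t$, which forces $w_x(0,t)\le 0$, i.e.\ $y_x(0,t)\le M(1+\kappa)e=e^{1+\kappa}\max\{y_0'e^{y_0}\}$. The factor $e$ comes from using the decay rate $1+\kappa$ rather than $\kappa$; this also keeps the barrier nondegenerate when $b_x\equiv 0$.

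Your fallback works equally well, even without the change of unknown, because $y_t-(by_x)_x=uy\le 0$ already makes $y$ a subsolution. Let $\lambda=1+\kappa$ and $\bar w=A(1-e^{-\lambda x})$.
\begin{itemize}
\item $\bar w$ is a supersolution, since $-(b\bar w_x)_x=A\lambda(\lambda b-b_x)e^{-\lambda x}\ge 0$.
\item Choosing $A=e^{\lambda}\max y_0'/\lambda$ gives $\bar w\ge y_0$, using $y_0(x)\le x\max y_0'$ and $1-e^{-s}\ge se^{-s}$.
\item The weak maximum principle applied to $\bar w-y$ gives $y\le\bar w$.
\item Hence $y_x(0,t)\le A\lambda=e^{1+\kappa}\max y_0'\le e^{1+\kappa}\max\{y_0'e^{y_0}\}$.
\end{itemize}
However, you must use the barrier on all of $[0,1]$, where the condition $\bar w(1)\ge 0=y(1,t)$ is automatic, not just near $x=0$. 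On a subinterval $(0,\delta)$ you would need $\bar w(\delta)\ge y(\delta,t)$, which brings $\|y_0\|_{L^\infty}$ into the constant.
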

\emph{Proof of Lemma \ref{p1.2}.}
In the case $y_0 \equiv 0$, the result is clearly valid because $y \equiv 0$. So, let us suppose that $y_0 \not\equiv 0$.
The existence and uniqueness of a classical solution $y \in C^{2+\sigma, 1+\frac{\sigma}{2}}(\overline{Q_T})$ for problem (\ref{1.2}) is a consequence of \cite[Theorem 5.2, p. 320]{LSU}. To prove (a), we first multiply both sides of the first equation of (\ref{1.2}) by $  y $, integrate over $  (0, 1)  $,  and utilize the assumption $  u \leq 0    $  to obtain
\begin{equation*}
\frac{1}{2} \frac{d}{dt}\int_{0}^{1} y^2  dx+\int_{0}^{1} by_x^2  dx\leq 0.
\end{equation*}
In view of \eqref{1.3}, it follows that
\begin{equation}\label{1.5}
\| y(\cdot, t) \|_{L^2(0, 1)} \leq \| y_0 \|_{L^2(0, 1)}\quad \forall t \in [0, T].
\end{equation}
Now, multiplying equation (\ref{1.2}) by $  y_t  $ and integrating over $  (0, 1)  $, we deduce
\begin{equation*}
 \int_{0}^{1} y_t^2  dx + \frac{1}{2} \frac{d}{dt}\int_{0}^{1} by_x^2  dx  =  \frac{1}{2}\int_{0}^{1} b_ty_x^2   dx+\int_{0}^{1} uyy_t dx.
\end{equation*}
Applying Cauchy's inequality and noting that $    b_t  $  and $   u  $ are bounded, we arrive at
\begin{equation*}
 \frac{d}{dt}\int_{0}^{1} by_x^2  dx \leq \|b_t(\cdot, t)\|_{L^{\infty}(0,1)} \int_{0}^{1} y_x^2   dx+\|u(\cdot, t)\|^2_{L^{\infty}(0,1)} \int_{0}^{1} y^2 dx.
\end{equation*}
This, together with inequalities (\ref{1.3}) and (\ref{1.5}), yields
\begin{equation*}
 \frac{d}{dt}\int_{0}^{1} by_x^2  dx \leq  K_1\int_{0}^{1} by_x^2   dx+K_2,
\end{equation*}
where
\begin{equation*}
K_1= \frac{|b_t|_{C(\overline{Q_T})}}{\rho}  \quad\  \text{and}  \quad\  K_2 = |u|^2_{C(\overline{Q_T})} \| y_0 \|^2_{L^2(0, 1)}.
\end{equation*}
By Gronwall's inequality, we obtain
\begin{equation}\label{1.6}
\| y_x(\cdot, t) \|_{L^2(0, 1)}  \leq  \sqrt{K_3}e^{\frac{K_1t}{2}}\quad \forall t \in [0, T],
\end{equation}
where
$$K_3= \frac{K_2+\|\sqrt{b(\cdot, 0)}y'_0 \|^2_{L^2(0, 1)} }{\rho}.$$

Next, we differentiate (\ref{1.2}) with respect to the time variable to deduce that $y_t$ is the solution of the
problem
\begin{equation}\label{2.8}
\left\{\begin{array}{ll}
z_t-(bz_{x})_x-uz=(b_ty_{x})_x+u_ty &\text{in}\ Q_T,\\[2mm]
z(0, t) = z(1, t) = 0 & \text{in } (0, T), \\[2mm]
 z(x,0) =  z_0(x) &\text{in}\ (0,1),
\end{array}\right.
\end{equation}
where $ z_0(x)=\left( b(x,0)  y'_{0}(x) \right)' + u(x,0)y_0(x)$.
Multiplying both sides of the first equation of (\ref{2.8}) by $z$ and integrating over $(0,1)$, we have
\begin{equation*}
\frac{1}{2} \frac{d}{dt} \int_{0}^{1} z^2  dx + \int_{0}^{1} bz_x^2  dx- \int_{0}^{1} uz^2 dx = -  \int_{0}^{1} b_ty_{x}z_x  dx+\int_{0}^{1} u_tyz  dx.
\end{equation*}
Since $u$ is nonpositive and $b$ satisfies (\ref{1.3}),  it follows that
\begin{equation}\label{4.16}
\frac{1}{2} \frac{d}{dt} \int_{0}^{1} z^2  dx + \rho \int_{0}^{1} z_x^2  dx\leq  \int_{0}^{1} |b_ty_{x}z_x|  dx+\int_{0}^{1} |u_tyz|  dx.
\end{equation}
We estimate the right-hand side terms of \eqref{4.16}. First, an application of Cauchy's inequality with parameter $\varepsilon=\rho$  yields
\begin{equation*}
\int_{0}^{1} |b_ty_{x}z_x|  dx\leq   \frac{\rho}{2} \int_{0}^{1} z_x^2  dx+\frac{\|b_t(\cdot,t)\|^2_{L^\infty(0,1)}}{2\rho}\int_{0}^{1} y_{x}^2  dx.
\end{equation*}
For the second term, using Cauchy's inequality and Poincar\'{e}'s inequality, we obtain
\begin{equation*}
\int_{0}^{1} |u_tyz|  dx\leq \frac{\rho}{2} \int_{0}^{1} z_x^2   dx+\frac{\|u_t(\cdot,t)\|^2_{L^\infty(0,1)}}{2\rho}\int_{0}^{1} y_{x}^2   dx.
\end{equation*}
Substituting these two inequalities into \eqref{4.16} leads to
\begin{equation}\label{1.7}
 \frac{d}{dt}\int_{0}^{1} z^2  dx \leq  K_4\int_{0}^{1} y^2_{x}  dx,
\end{equation}
where
\begin{equation*}
K_4= \frac{|b_t|^2_{C(\overline{Q_T})}+|u_t|^2_{C(\overline{Q_T})}}{\rho}.
\end{equation*}
Combining (\ref{1.6}) with (\ref{1.7}), we get
\begin{equation*}
\| y_t(\cdot, t) \|_{L^2(0, 1)}  \leq \left[2\left(\| (b(\cdot, 0)y'_{0})' \|^2_{L^2(0, 1)}+K_2 \right)+K_4K_3Te^{K_1T}\right]^\frac{1}{2} \quad \forall t \in [0, T].
\end{equation*}

The lower bound in (b) follows directly from the definition, the homogeneous boundary condition and the nonnegativity of $y$. To derive the upper bound, we use Bernstein's method (see \cite{LSU}, pp. 414 and 537), by introducing the auxiliary function
\begin{equation*}
w(x, t) = e^{y(x,t)} + M e^{1-\Big(1+\frac{|b_x|_{ C(\overline{Q_T})}}{\rho}\Big)x}-1,
\end{equation*}
where $M = \max\limits_{x \in [0,1]} \big\{ y'_0(x) e^{y_0(x)} \big\}\frac{\rho}{\rho+{|b_x|_{ C(\overline{Q_T})}}} e^{\frac{|b_x|_{ C(\overline{Q_T})}}{\rho}}$. Note that $M > 0$ due to the hypotheses on $y_0$.

By direct calculation, it can be checked that $  w \in C^{2,1}(\overline{Q_T})  $ and satisfies the following PDE:
\begin{equation*}
\begin{array}{ll}
\displaystyle &w_t(x, t) - (bw_{x})_{x}(x, t) \\[3mm]= \!\!\!\!&\displaystyle\left[ u(x, t)y(x, t) - b(y_x(x, t))^2 \right] e^{y(x,t)} \\[3mm]
\displaystyle&-  M \Big(1+\frac{|b_x|_{ C(\overline{Q_T})}}{\rho}\Big)e^{1-\Big(1+\frac{|b_x|_{ C(\overline{Q_T})}}{\rho}\Big)x}\left[ \Big(1+\frac{|b_x|_{ C(\overline{Q_T})}}{\rho}\Big)b-b_x \right].
\end{array}
\end{equation*}
Let $ (x_0, t_0)  $ be a point in $  Q_T  $ where $   w   $ attains its maximum. We will show that $  x_0 = 0   $. If $  (x_0, t_0) \in Q_T $, then at this point we have $  w_t(x_0, t_0) = 0  $, $  w_x(x_0, t_0) = 0  $ and $  w_{xx}(x_0, t_0) \leq 0   $. Substituting into the above PDE leads to the contradiction:
\begin{equation*}
0 \leq w_t(x_0, t_0) - (bw_{x})_{x}(x_0, t_0) \leq -  M \Big(1+\frac{|b_x|_{ C(\overline{Q_T})}}{\rho}\Big)e^{1-\Big(1+\frac{|b_x|_{ C(\overline{Q_T})}}{\rho}\Big)x_0}b(x_0, t_0)<0.
\end{equation*}
The same reasoning can be used to exclude the case $ t_0 = T $, since the only difference is that $ w_t(x_0, t_0) \geq 0 $. Consequently, $ (x_0, t_0) $ must be a point on the parabolic boundary of $ Q_T $, that is, $ x_0 = 0 $, $ x_0 = 1 $, or $ t_0 = 0 $.

The possibilities $ x_0 = 1 $ and $ t_0 = 0 $ can be excluded as follows. First, we observe that $ w(0, t)  \geq w(1, t) $. Second, at the initial time $ t = 0 $, we have $ w(x, 0) = e^{y_0(x)} +  M e^{1-\Big(1+\frac{|b_x|_{ C(\overline{Q_T})}}{\rho}\Big)x}-1 $. Given the choice of $ M $, the spatial derivative at $ t=0 $ is
\begin{equation*}
 w_x(x, 0) = y'_0(x) e^{y_0(x)} - M\Big(1+\frac{|b_x|_{ C(\overline{Q_T})}}{\rho}\Big) e^{1-\Big(1+\frac{|b_x|_{ C(\overline{Q_T})}}{\rho}\Big)x}\leq  0  \quad\ \forall x \in [0, 1].
\end{equation*}
Therefore, $ w(x, 0)  $ is non-increasing in $  [0, 1]  $, so $  w(x, 0) \leq w(0, 0) = M e   $ for all $  x \in [0, 1]  $. This implies that the maximum of $   w  $ over $  \overline{Q_T } $ is attained at $   x=0   $, where $  w(0, t) = M e   $ for all $  t \in [0, T]   $. Hence, $  w_x(0, t) \leq 0   $, or equivalently, $ y_x(0, t) \leq M e\Big(1+\frac{|b_x|_{ C(\overline{Q_T})}}{\rho}\Big)  $, for all $  t \in [0, T]  $. \endpf

Following an argument similar to that of Corollary 4.4 in \cite{FK}, we have
\begin{corollary}\label{cor1.2}
Under the same hypotheses of Lemma \ref{p1.2}, except that $u \in W^{1, \infty}(0, T; L^\infty(0,1))$, the conclusions $(a)-(b)$
remain valid, for almost every point.
\end{corollary}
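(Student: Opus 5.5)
The plan is a regularization argument that follows Corollary 4.4 of \cite{FK}. Given $u \in W^{1,\infty}(0,T;L^\infty(0,1))$ with $u \le 0$, I will build smooth nonpositive approximations. First I mollify $u$ in $(x,t)$, after extending it to a neighbourhood of $\overline{Q_T}$ by reflection in $t$ and by zero (or reflection) in $x$. This gives $u_k \in C^1(\overline{Q_T})$ with the following properties:
\begin{itemize}
\item $u_k \le 0$;
\item $|u_k|_{C(\overline{Q_T})} \le \|u\|_{L^\infty(Q_T)}$ and $|(u_k)_t|_{C(\overline{Q_T})} \le \|u_t\|_{L^\infty(Q_T)}$, because mollification commutes with $\partial_t$ and does not increase sup norms;
\item $u_k \to u$ in $L^p(Q_T)$ for every $p<\infty$ and a.e. in $Q_T$.
\end{itemize}

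The key point is that the constants $K_1,\dots,K_4$ in Lemma \ref{p1.2}(a) depend on $u$ only through $|u|_{C(\overline{Q_T})}$ and $|u_t|_{C(\overline{Q_T})}$. The bound in (b) does not depend on $u$ at all. So applying Lemma \ref{p1.2} to the classical solutions $y_k$ of \eqref{1.2} with $u_k$ gives bounds (a) and (b) that are uniform in $k$, with $\|u\|_{L^\infty}$ and $\|u_t\|_{L^\infty}$ in place of the $C$-norms.

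Next I pass to the limit. The energy estimates \eqref{1.5} and \eqref{1.6}, together with (a), give uniform bounds of $y_k$ in $L^\infty(0,T;H_0^1)$ and of $(y_k)_t$ in $L^\infty(0,T;L^2)$. From the equation, $(b(y_k)_x)_x = (y_k)_t - u_k y_k$ is then bounded in $L^\infty(0,T;L^2)$, so $y_k$ is bounded in $L^\infty(0,T;H^2)$. By Aubin--Lions, a subsequence converges strongly in $C([0,T];C^1([0,1]))$ to some $y$. Since $u_k y_k \to u y$ in $L^2(Q_T)$, $y$ is the (unique) strong solution of \eqref{1.2} with control $u$. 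Estimate (a) passes to the limit by weak-$*$ lower semicontinuity: $(y_k)_t \rightharpoonup y_t$ weak-$*$ in $L^\infty(0,T;L^2)$, which yields (a) for a.e. $t$. Estimate (b) passes to the limit because $(y_k)_x(0,t) \to y_x(0,t)$ uniformly in $t$.

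The main obstacle is the compactness step in the $x$-derivative at the boundary, i.e. making sense of $y_x(0,t)$ for the limit and having the traces converge. The uniform $L^\infty(0,T;H^2)$ bound combined with the $L^\infty(0,T;L^2)$ bound on $y_t$ is what I would use: interpolation gives equicontinuity in time with values in $H^{2-\delta}\hookrightarrow C^1$, which suffices. A secondary issue is that $y_0$ need not satisfy a compatibility condition tied to $u_k(\cdot,0)$. This only matters for the classical regularity in Lemma \ref{p1.2}, and the first order condition there involves only $y_0(0)=y_0(1)=0$ and the equation at the corners. If needed, I would handle it by additionally mollifying $y_0$ while keeping it nonnegative and compatible; $\|(b(\cdot,0)y_0')'\|_{L^2}$ and $\max\{y_0'e^{y_0}\}$ converge under this, so the limits of the bounds are the stated ones.
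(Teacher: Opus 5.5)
Your proposal is correct and follows the route the paper intends. The paper gives no argument beyond deferring to Corollary 4.4 of \cite{FK}, which is this same scheme: approximate $u$ by smooth nonpositive controls with $|u_k|_{C(\overline{Q_T})}\le\|u\|_{L^\infty(Q_T)}$ and $|(u_k)_t|_{C(\overline{Q_T})}\le\|u_t\|_{L^\infty(Q_T)}$, then pass to the limit in the uniform bounds of Lemma \ref{p1.2}. Your compactness details (uniform bounds in $L^\infty(0,T;H^2(0,1))\cap W^{1,\infty}(0,T;L^2(0,1))$, convergence in $C([0,T];C^1([0,1]))$ for the boundary derivative in (b), and weak-$*$ lower semicontinuity for (a)) soundly supply what the paper leaves implicit, and you are right that the compatibility condition does not depend on $u$ because $y_0(0)=y_0(1)=0$.
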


Now, we are in a position to give a proof of Proposition \ref{l2.3}.

\emph{Proof of Proposition \ref{l2.3}.}
The conclusion is clearly valid if the $L^2$-norm of the initial datum is small enough. Our aim is to address the general case.
Without loss of generality, we assume that the initial data appearing in the proof are nonnegative regular functions with compact support in $(0, 1)$.
We first prove the existence of nonnegative constants $m_1,\ldots, m_M$, intermediate times $0 \leq T_1 \leq T_2 \leq \cdots \leq T_M < T $, and corresponding solutions $ y_j$ for $j = 1, \ldots, M$, such that the conclusion of Proposition \ref{l2.3} holds. To this end, we consider the family of problems
\begin{equation}\label{3.8}
\left\{\begin{array}{ll}
y_{t}-(by_{x})_x=-my(x,t)\chi_{(0,l)}(x),  &\text{in}\ Q_T,\\[2mm]
y(0,t)=0, \ y(1,t)=0, &\text{in}\ (0,T),\\[2mm]
y(x,0)=y_0(x),&\text{in}\ (0,1),
\end{array}\right.
\end{equation}
and denote its solution by $y_{m,1}$ for each $m \geq 0$. Multiplying the PDE in (\ref{3.8}) by $y_{m,1}$ and integrating by parts over $Q_T$, we obtain
\begin{equation*}
\begin{array}{ll}
\displaystyle\frac{1}{2} \int_0^1 (y_{m,1}(x, T))^2  dx - \frac{1}{2} \int_0^1 (y_0(x))^2  dx =\!\!\!\!&\displaystyle -\iint_{Q_T}b(x, t)\left( (y_{m,1})_x(x, t) \right)^2   dx dt\\[4mm]
 &\displaystyle- m \int_0^T \int_0^l(y_{m,1}(x, t))^2 dx dt.
\end{array}
\end{equation*}
Since the term $-\iint_{Q_T}b(x, t)\left( (y_{m,1})_x(x, t) \right)^2   dx dt$ is nonpositive, it follows that
\begin{equation*}
\int_0^T \int_0^l (y_{m,1}(x, t))^2 dx dt\leq \frac{1}{2m} \int_0^1 (y_0(x))^2  dx.
\end{equation*}
From this inequality, we deduce that the sequence $g_m(t) = \sqrt{\int_0^l (y_{m,1}(x, t))^2 dx} $ converges to $0$ in $L^2(0, T)$ as $m \to +\infty $. Consequently, by taking a subsequence if necessary, we have $g_m(t) \to 0$ for almost every $t \in (0, T)$ as $m \to +\infty $.
Now, given $\epsilon > 0$, there exist $T_1 \in (0, T)$ and a sufficiently large $m_1 > 0$ such that $g_{m_1}(T_1) < \frac{\epsilon}{2\sqrt{2M - 1}}$. Denoting $y_1 = y_{m_1,1}$, this implies the following estimate:
\begin{equation}\label{2.3+}
\int_0^{l} (y_1(x, T_1))^2 dx \leq \frac{\epsilon^2}{4(2M - 1)}.
\end{equation}

Following the same scheme, we now consider the family of problems:
\begin{equation}\label{e2.1}
\left\{\begin{array}{ll}
y_{t}-(by_{x})_x=-my(x,t)\chi_{(l,2l)}(x),  &\text{in}\ (0,1)\times(T_1,T),\\[2mm]
y(0,t)=0, \ y(1,t)=0, &\text{in}\ (T_1,T),\\[2mm]
y(x,T_1)=y_{01}(x),&\text{in}\ (0,1),
\end{array}\right.
\end{equation}
where $y_{01}(x) = y_1(x, T_1)$, and $y_1$ is the function obtained in the previous step. Denoting its solution by $y_{m,2}$ for each $m \geq 0$, and repeating the same argument, we can find $T_2 \in (T_1, T)$ and a sufficiently large $m_2 > 0$ such that the function $y_2 = y_{m_2, 2}$ satisfies
\begin{equation}\label{2.3}
\int_l^{2l} (y_2(x, T_2))^2 dx \leq \frac{\epsilon^2}{4(2M - 1)}.
\end{equation}
In fact, the time $T_2$ can be chosen arbitrarily close to $T_1$. More precisely, we can select $T_2$ such that
\begin{equation}\label{2.2-}
0 < T_2 - T_1 \leq \min\left\{ \frac{\epsilon^2}{8(2M - 1)C_1},\ T - T_1 \right\},
\end{equation}
where
\begin{equation}\label{2.}
\begin{array}{ll}
\displaystyle C_1 = \| y_{01} \|_{L^{\infty}(0,1)}\Bigg(\!\!\!\!\!&\displaystyle|b(0, \cdot)|_{ C([0, T])}e^{\Big(1+\frac{|b_x|_{ C(\overline{Q_T})}}{\rho}\Big)}  \max\limits_{x \in [0, 1]} \left\{ y_{01}'(x) e^{y_{01}(x)} \right\} \\[5mm]
&+ \left[2\left(\| (b(\cdot, T_1)y'_{01})' \|^2_{L^2(0, 1)}+K_2 \right)+K_4K_3Te^{K_1T}\right]^\frac{1}{2}\Bigg),
\end{array}
\end{equation}
with the constants $  K_1  $, $  K_2  $, $K_3$ and $K_4$ are defined as in Lemma \ref{p1.2}. Note that the terms appearing in (\ref{2.}) are directly related to the conclusions $(a)$ and $(b)$ of that lemma.

Under these conditions, we aim to prove that
\begin{equation*}
\int_0^{2l} (y_2(x, T_2))^2 dx \leq \frac{3\epsilon^2}{4(2M - 1)}.
\end{equation*}
In view of inequality (\ref{2.3}), it suffices to prove that
\begin{equation}\label{2.5}
\int_0^{l} (y_2(x, T_2))^2 dx \leq \frac{\epsilon^2}{2(2M - 1)}.
\end{equation}
We multiply the PDE in (\ref{e2.1}) (with $ m = m_2$) by $y_2$ and integrate by parts over the domain $(0, l) \times (T_1, T_2)$. In this process, we use the boundary condition at $x = 0$ and note that the intervals $(0, l)$ and $(l, 2l)$ are disjoint (so the source term vanishes on $ (0, l) $), thus obtaining
\begin{equation*}
\begin{array}{ll}
\displaystyle\frac{1}{2} \int_0^{l} (y_2(x, T_2))^2 dx - \frac{1}{2} \int_0^{l} (y_{01}(x))^2 dx =\!\!\!\!&\displaystyle -\int_{T_1}^{T_2} \int_0^{l} b(x, t)\left( (y_2)_x(x, t) \right)^2 dx dt \\[4mm]
 &\displaystyle+ \int_{T_1}^{T_2} b(l, t)(y_2)_x(l, t) y_2(l, t) dt.
\end{array}
\end{equation*}
Combining this identity with the previous estimate (\ref{2.3+}), we derive the inequality:
\begin{equation*}
\begin{array}{ll}
\displaystyle\int_0^{l} (y_2(x, T_2))^2 dx \!\!\!&\displaystyle\leq \int_0^{l} (y_{01}(x))^2 dx + 2 \int_{T_1}^{T_2}  b(l, t)(y_2)_x(l, t) y_2(l, t) dt\\[5mm]
&\displaystyle\leq \frac{\epsilon^2}{4(2M - 1)}+ 2 \int_{T_1}^{T_2}  b(l, t)(y_2)_x(l, t) y_2(l, t) dt.
\end{array}
\end{equation*}
Therefore, to verify (\ref{2.5}), it is sufficient to prove that the boundary term satisfies:
\begin{equation}\label{2.7}
2 \int_{T_1}^{T_2} b(l, t)(y_2)_x(l, t) y_2(l, t) dt \leq \frac{\epsilon^2}{4(2M - 1)}.
\end{equation}
We now verify this bound by exploiting the specific choice of $ T_2 $ (see (\ref{2.2-})--(\ref{2.})). Using H\"{o}lder's inequality and Corollary \ref{p1.2} (with $u(x) = -m_2 \cdot \chi_{(l,2l)}(x)$, which is a bounded function) for problem (\ref{e2.1}), we find
\begin{equation*}
\begin{array}{ll}
\displaystyle b(l, t)(y_2)_x(l, t) \!\!\!&\displaystyle=  b(0, t)(y_2)_x(0, t) + \int_0^l (b(y_2)_{x})_x(x, t)dx\\[5mm]
&\displaystyle=  b(0, t)(y_2)_x(0, t)+ \int_0^l (y_2)_t(x, t)dx\\[5mm]
&\displaystyle\leq  b(0, t)(y_2)_x(0, t) +l^\frac{1}{2} \| (y_2)_t(\cdot, t) \|_{L^2(0, l)}   \\[5mm]
&\displaystyle\leq |b(0, \cdot)|_{ C([0, T])}e^{\Big(1+\frac{|b_x|_{ C(\overline{Q_T})}}{\rho}\Big)}  \max\limits_{x \in [0, 1]} \left\{ y_{01}'(x) e^{y_{01}(x)} \right\} \\[5mm]
&\quad+ \left[2\left(\| (b(\cdot, T_1)y'_{01})' \|^2_{L^2(0, 1)}+K_2 \right)+K_4K_3Te^{K_1T}\right]^\frac{1}{2}
\overset{\text{def}}{=} C_2.
\end{array}
\end{equation*}
We can now combine this estimate with the fact that $0 \leq y_2 \leq \|y_{01}\|_{L^\infty(0,1)}$ (thanks to the maximum principle), to get
\begin{equation*}
2\int_{T_1}^{T_2} b(l, t)(y_2)_x(l, t)y_2(l, t)dt \leq 2\int_{T_1}^{T_2} C_2 \cdot \|y_{01}\|_{L^\infty(0,1)}dt = 2(T_2 - T_1)C_1 \leq \frac{\epsilon^2}{4(2M - 1)},
\end{equation*}
which is exactly (\ref{2.7}).

Finally, we can repeat the argument to select the constants $m_j$, the times $T_j \in [T_{j-1}, T)$, and the solutions $y_j$, for $j = 1, 2, \dots, M-1$, verifying
\begin{equation*}
\int_0^{jl} (y_j(x, T_j))^2dx \leq \frac{(2j - 1)\epsilon^2}{4(2M - 1)},
\end{equation*}
and arrive at
\begin{equation*}
\int_0^1 (y_M(x, T_M))^2dx \leq \frac{\epsilon^2}{4},
\end{equation*}
as desired.   \endpf

\section{Local controllability of system \eqref{1.1}}\label{sec3}
In this section, we prove the approximate controllability of the quasi-linear parabolic equation (\ref{1.1}) using a fixed-point argument.
For this, let $R>0$ be given and set
\begin{equation*}\label{K}
 \mathcal D :=  \left\{ z \in C^{1+\frac{\sigma}{2},1+\frac{\sigma}{4}}(\overline{Q_T}) : |z|_{C^{1+\frac{\sigma}{2},1+\frac{\sigma}{4}}(\overline{Q_T})} \leq R,\ z(0) = y_0 \right\}.
\end{equation*}
Clearly, $ \mathcal D $ is a nonempty, convex, and compact subset of $ L^2(Q_T) $ for small initial data $ y_0 $.

We consider the mapping $\Lambda$: $ \mathcal D \rightarrow 2^\mathcal D $, defined as
follows: for any $ z \in \mathcal D $,
\begin{equation*}
\begin{array}{ll}
 \Lambda(z) =  y,
\end{array}
\end{equation*}
where $y$ is the solution to the following linearized equation of (\ref{1.1}):
\begin{equation}\label{homo}
\left\{\begin{array}{ll}
y_{t}-(a(z)y_{x})_x=u(x,t)y(x,t) &\text{in}\ Q_T,\\[2mm]
y(0,t)=y(1,t)=0 &\text{in}\ (0,T),\\[2mm]
y(x,0)=y_0(x)&\text{in}\ (0,1).
\end{array}\right.
\end{equation}
By Theorem \ref{t2.1}, for any $ z \in \mathcal D $, there exists a control $u\in L^{\infty}({Q_T})$ (in fact, by smooth approximation, even
$u\in C^{\frac{\sigma}{2},\frac{\sigma}{4}}(\overline{Q_T})$) such that the corresponding solution $y$ of \eqref{homo} satisfies
\begin{equation}\label{y}
 \|y(T)-y_d\|_{L^{2}(0,1)} \leq\varepsilon.
\end{equation}

Now, we are in a position to give a proof of Theorem \ref{1.4-}.

\emph{Proof of Theorem \ref{1.4-}.} We analyze the controllability of \eqref{1.1} in
the frame of classical solutions.
Specifically, we prove that $\Lambda$ has a fixed point by Kakutani's fixed point theorem, which yields the approximate controllability of system (\ref{1.1}).

\emph{Step 1.} We show that $ \Lambda(\mathcal D) \subseteq \mathcal D$.
By the Schauder theory of second-order linear parabolic equations  (see \cite[Theorem 7.2.24]{WYW}), we have the estimate
\begin{equation}\label{3.10}
|y|_{C^{2+\frac{\sigma}{2},1+\frac{\sigma}{4}}(\overline{Q_{T}})}\leq C|y_0|_{C^{2+\frac{\sigma}{2}}([0,1])},
\end{equation}
where $C$ depends on $|a|_{C([-R,R])}$, $|u|_{C^{\frac{\sigma}{2},\frac{\sigma}{4}}(\overline{Q_T})}$ and $T$. Moreover, by checking the proof of  Proposition \ref{l2.3}, it follows that
\begin{equation}\label{3.21}
|u|_{C^{\frac{\sigma}{2},\frac{\sigma}{4}}(\overline{Q_T})}\leq C(\varepsilon,\|y_0\|_{L^2(0,L_0)}).
\end{equation}
Consequently, there exists a sufficiently small constant $\gamma > 0$ such that $|y|_{C^{2+\frac{\sigma}{2},1+\frac{\sigma}{4}}(\overline{Q_{T}})}\leq R$ provided that $ |y_0|_{C^{2+\frac{\sigma}{2}}([0,1])} \leq \gamma$; that is, $y \in \mathcal D$.

\emph{Step 2.} We prove that the mapping $\Lambda$ is upper semicontinuous.
Assume that $z_n\in \mathcal D$, $z_n\rightarrow z$, $y^{(n)}\in \Lambda(z_n)$, and $y^{(n)}\rightarrow y$ as $n\rightarrow\infty$. We aim to show that $y \in  \Lambda(z)$.

For any $z_n\in  D$, let us consider the following system
\begin{equation}\label{3.15}
\left\{\begin{array}{ll}
y^{(n)}_{t}-(a(z_n)y^{(n)}_{x})_x=u_n(x,t)y^{(n)}(x,t), & (x,t)\in Q_{T},\\[2mm]
y^{(n)}(0,t)=0, \ y^{(n)}({1},t)=0, &t\in(0,T),\\[2mm]
y^{(n)}(x,0)=y_0(x),&x\in(0,1).
\end{array}\right.
\end{equation}
From (\ref{3.10}), we see that $\{y^{(n)}\}_{n=1}^{\infty}$ is uniformly bounded in ${C^{2+\frac{\sigma}{2},1+\frac{\sigma}{4}}(\overline{Q_{T}})}$. Moreover, $\{u_n\}_{n=1}^{\infty}$ satisfies (\ref{3.21}). It follows from Arzel\`{a}-Ascoli theorem that there exists a function $u\in C^{\frac{\sigma}{2},\frac{\sigma}{4}}(\overline{Q_T})$ such that $u_n\rightarrow u$ {in} $C(\overline{Q_T})$. Furthermore,
\begin{equation*}
\begin{array}{ll}
z_n\rightarrow  z\quad  \text{in} \quad  C^{1,1}(\overline{Q_T}),\\[3mm]
y^{(n)}\rightarrow y\quad  \text{in} \quad C^{2,1}(\overline{Q_{T}}).
\end{array}
\end{equation*}
Letting $n\rightarrow\infty$ in \eqref{3.15},
we find that the following equation holds:
\begin{equation}\label{3.22}
\left\{\begin{array}{ll}
y_{t}-(a(z)y_{x})_x=u(x,t)y(x,t), & (x,t)\in Q_{T},\\[2mm]
y(0,t)=0, \ y({1},t)=0, &t\in(0,T),\\[2mm]
y(x,0)=y_0(x),&x\in(0,1).
\end{array}\right.
\end{equation}
Since equation \eqref{3.22} admits at most one classical solution, we conclude that $y \in  \Lambda(z)$.

\emph{Step 3.} For any $z \in \mathcal D $, $\Lambda(z)$ is a nonempty, convex and compact subset of $ L^2(Q_T) $.

Therefore, by Kakutani's fixed point theorem, there exists a $y \in  \mathcal D$ such that $y \in \Lambda(y)$. This means that for system (\ref{1.1}) we can find a control $  u \in C^{\frac{\sigma}{2},\frac{\sigma}{4}}(\overline{Q_T})$ with $\operatorname{supp} u(x,t) \subseteq \omega(t)\times [0,T]$ such that the corresponding solution $ y $ satisfies (\ref{y}). Moreover, the cost of the control $u $ fulfills estimate (\ref{3.21}).

Thus the proof of Theorem \ref{1.4-} is complete. \endpf


\begin{thebibliography}{1}{\small}


\bibitem{Bec} M. Beceanu, {\it Local exact controllability of the diffusion equation in one dimension},  Abstr. Appl. Anal. 14 (2003) 793-811.

%\bibitem{CH} J. R. Cannon, C. D. Hill,  {\it Existence, uniqueness, stability, and monotone dependence in a Stefan problem for the heat equation}, J. Math. Mech. 17 (1967) 1-19.


\bibitem{FK} L. A. Fern\'{a}ndez, A. Y. Khapalov, {\it Controllability properties for the one-dimensional Heat equation under multiplicative or nonnegative additive controls with local mobile support}, ESAIM: Control Optim. Calc. Var. 18 (2012) 1207-1224.

%\bibitem{FHL} E. Fern\'{a}ndez-Cara, F. Hern\'{a}ndez, J. L\'{\i}maco, {\it Local null controllability of a 1D Stefan problem}, Bull. Braz. Math. Soc.
%(N. S.) 50 (3) (2019) 745-769.

%\bibitem{FLTD} E. Fern\'{a}ndez-Cara, J. Limaco, Y. Thamsten, D. Menezes, {\it Local null controllability of a quasi-linear system and related numerical experiments}, ESAIM Control Optim. Calc. Var. 29 (2023) 27.

%\bibitem{Fri} A. Friedman, {\it Partial Differential Equations}, Holt, Rinehart and Winston, New York, 1969.


%\bibitem{FI} A. V. Fursikov, O. Yu. Imanvilov,  {\it Controllability of Evolution Equations}, Lecture Notes Ser. 34, Seoul National University, Seoul, Korea, 1996.


\bibitem{Gu} L. Gu, {\it Parabolic Partial Differential Equation of Second Order}, Xiamen University Press,
Fujian Province, China, 1995 (in Chinese).

\bibitem{LSU} O. A. Lady\v{z}enskaja, V. A. Solonnikov, N. N. Ural'ceva,  {\it Linear and Quasilinear
Equations of Parabolic Type}, Transl. Math. Monogr. 23, AMS, Providence, RI, 1968.

\bibitem{LG} L. Li, H. Gao, {\it Approximate controllability for degenerate heat equation with bilinear control}, J. Syst. Sci. Complex. 34 (2) (2021) 537-551.

\bibitem{Liu} L. Liu, {\it Stability of a class of parabolic equations in non-cylindrical domains}, Appl. Anal. to appear.

\bibitem{LiG} X. Liu, H. Gao, {\it Controllability of a class of Newtonian filtration equations with control and state constraints}, SIAM J. Control Optim. 46 (6) (2007), 2256-2279.


\bibitem{LZ} X. Liu, X. Zhang, {\it
Local controllability of multidimensional quasi-linear parabolic equations}, SIAM J. Control Optim. 50 (2012) 2046-2064.

\bibitem{LTZ} J. Loh\'{e}ac, E. Tr\'{e}lat, E. Zuazua, {\it Minimal controllability time for the heat equation under unilateral state or control constraints}, Math. Models Methods Appl. Sci. 27 (9) (2017) 1587-1644.


\bibitem{NC} M. R. Nu\~{n}ez-Ch\'{a}vez, {\it Controllability under positive constraints for quasilinear parabolic PDEs}, Math. Control Relat. Fields 12 (2) (2022) 327-341.


\bibitem{PZ} D. Pighin, E. Zuazua, {\it Controllability under positivity constraints of semilinear heat equations}, Math. Control Relat. Fields 8 (3-4) (2018) 935-964.


\bibitem{POV} A. I. Prilepko, D. G. Orlovsky, I. A. Vasin, {\it Methods for solving inverse problems in mathematical physics}, Marcel Dekker
Inc., New York, 2000.


%\bibitem{WLL} L. Wang, Y. Lan, P. Lei, {\it
%Local null controllability of a free-boundary problem for the quasi-linear 1D parabolic equation},  J. Math. Anal. Appl. 506 (2) (2022) 1-26.


%\bibitem{WLW} L. Wang, P. Lei, Q. Wu, {\it
%Null controllability of a 1D Stefan problem for the heat equation governed by a multiplicative control}, Systems Control Lett. 171 (2023) 1-8.

\bibitem{WYW} Z. Wu, Y. Yin, C. Wang, {\it Elliptic and Parabolic Equations}, World Scientific Publishing Co. Pvt. Ltd., Hackensack, NJ, 2006.

%\bibitem{LL} L. Liu,  X. Liu {\it Controllability and observability of some coupled stochastic parabolic systems}, World Scientific Publishing Co. Pvt. Ltd., Hackensack, NJ, 2006.



\end{thebibliography}
\end{document}